\newcommand{\bC}{{\mathbb C}}
\newcommand{\bF}{{\mathbb F}}
\newcommand{\bG}{{\mathbb G}}
\newcommand{\bH}{{\mathbb H}}
\newcommand{\bK}{{\mathbb K}}
\newcommand{\bP}{{\mathbb P}}
\newcommand{\bQ}{{\mathbb Q}}
\newcommand{\bW}{{\mathbb W}}
\newcommand{\bZ}{{\mathbb Z}}
\newcommand{\cA}{{\mathcal A}}
\newcommand{\cC}{{\mathcal C}}
\newcommand{\cD}{{\mathcal D}}
\newcommand{\cH}{{\mathcal H}}
\newcommand{\cM}{{\mathcal M}}
\newcommand{\cO}{{\mathcal O}}
\newcommand{\cP}{{\mathcal P}}
\newcommand{\cR}{{\mathcal R}}
\newcommand{\cT}{{\mathcal T}}
\newcommand{\cU}{{\mathcal U}}
\newcommand{\cV}{{\mathcal V}}
\newcommand{\cY}{{\mathcal Y}}
\newcommand{\oB}{\overline{B}}
\newcommand{\ocJ}{{\overline{\mathcal J}}}
\newcommand{\omu}{{\overline{\mu}}}
\newcommand{\ocM}{{\overline{\mathcal M}}}
\newcommand{\ocR}{{\overline{\mathcal R}}}
\newcommand{\ra}{\rightarrow}
\newcommand{\lra}{\longrightarrow}
\newcommand{\inj}{\hookrightarrow}
\newcommand{\surj}{\mathrel{\mathrlap{\rightarrow}\mkern1mu\rightarrow}}
\newcommand{\G}{\Gamma}
\newcommand{\T}{\Theta}
\newcommand{\tT}{\widetilde{\Theta}}
\newcommand{\oT}{\overline{\Theta}}
\newcommand{\Coker}{\operatorname{Coker}}
\newcommand{\im}{\operatorname{im}}
\newcommand{\Img}{\operatorname{Im}}
\newcommand{\Ker}{\operatorname{Ker}}
\newcommand{\Pic}{\operatorname{Pic}}
\newcommand{\Prym}{\operatorname{Prym}}
\theoremstyle{definition}
\newtheorem{proposition}{Proposition}[section]
\newtheorem{lemma}[proposition]{Lemma}
\newtheorem{theorem}[proposition]{Theorem}
\newtheorem{corollary}[proposition]{Corollary}
\newtheorem{remark}[proposition]{Remark}
\numberwithin{equation}{section}
\begin{document}

\title[The irreducible components of the primal cohomology]{The irreducible components of the primal cohomology of the theta divisor of an abelian fivefold}

\author{Elham Izadi}

\address{Department of Mathematics, University of California San Diego, 9500 Gilman Drive \# 0112, La Jolla, CA 92093-0112, USA}

\email{eizadi@math.ucsd.edu}

\author{Jie Wang}

\address{Department of Mathematics, University of California San Diego, 9500 Gilman Drive \# 0112, La Jolla, CA 92093-0112, USA}
\email{jiewang884@gmail.com}

\thanks{The authors are indebted to the referee for many useful suggestions that have helped improve the exposition of this paper. The first author was partially supported by the National
Science Foundation. Any opinions, findings
and conclusions or recommendations expressed in this material are those
of the authors and do not necessarily reflect the views of the
NSF}

\subjclass[2010]{Primary 14C30 ; Secondary 14D06, 14K12, 14H40}

\begin{abstract}

  The primal cohomology $\bK_\bQ$ of the theta divisor $\T$ of a principally
  polarized abelian fivefold (ppav) is the direct sum of its invariant and anti-invariant parts $\bK_\bQ^{+1}$, resp. $\bK_\bQ^{-1}$ under the action of $-1$. For smooth $\T$, these have dimension $6$ and $72$ respectively. We show that $\bK_\bQ^{+1}$ consists of Hodge classes and, for a very general ppav, $\bK_\bQ^{-1}$ is a simple Hodge structure of
  level $2$. 

\end{abstract}

\maketitle

\tableofcontents

\section*{Introduction}

Let $A$ be a principally polarized abelian variety of dimension $g \geq 4$ with smooth theta divior $\T$. By the Lefschetz hyperplane theorem and Poincar\'e Duality (see, e.g., \cite{IzadiWang2015}) the cohomology of $\Theta$ is determined by that of $A$ except in the middle dimension $g-1$. The primitive cohomology of $\Theta$, in the sense of Lefschetz, is
\[
H^{g-1}_{pr}(\T, \bZ):=\Ker \left( H^{g-1}(\T,\bZ)\stackrel{\cup \theta|_\T}\lra H^{g+1}(\T,\bZ)\right).
\]
The primal cohomology of $\Theta$ is defined as (see \cite{IzadiWang2015} and \cite{IzadiTamasWang})
\[
\bK:=\Ker (j_*: H^{g-1}(\T,\bZ)\lra H^{g+1}(A,\bZ))
\] 
where $j : \Theta \ra A$ is the inclusion. This is a Hodge substructure of $H^{g-1}_{pr}(\T,\bZ)$ of rank $g! - \frac{1}{g+1}{2g \choose g}$ and level $g-3$ while the primitive cohomology $H^{g-1}_{pr}(\T,\bZ)$ has full level $g-1$.

The primal cohomology is therefore a good test case for the general Hodge conjecture. The general Hodge conjecture predicts that $\bK_{\bQ} := \bK \otimes \bQ$ is contained in the image, via Gysin push-forward, of the cohomology of a smooth (possibly reducible) variety of pure dimension $g-3$ (see \cite{IzadiWang2015}). This conjecture was proved in \cite{IzadiVanStraten} and \cite{IzadiTamasWang} in the cases $g=4$ and $g=5$. When $g=4$, it also follows from the proof of the Hodge conjecture in \cite{IzadiVanStraten} that for $(A,\Theta)$ generic, $\bK$ is a simple Hodge structure (isogenous to the third cohomology of a smooth cubic threefold). In this case the primal cohomology is fixed under the action of $-1$.

In the case $g=5$, the action of $-1$ splits $\bK_\bQ$ into the direct sum of its invariant piece $\bK_\bQ^{+1}$ and its anti-invariant piece $\bK_\bQ^{-1}$. We show that these have respective dimensions $6$ and $72$. Furthermore, we show that $\bK^{3,1}$ is contained in $\bK_\bC^{-1}$ which shows that $\bK_\bQ^{+1}$ consists of Hodge classes. It follows from the main result of \cite{IzadiTamasWang} and the Lefschetz $(1,1)$ theorem that the latter are algebraic. In the upcoming paper \cite{ConderDeweyIzadi}, we will concretely describe natural surfaces in $\T$ whose classes generate $\bK^{+1}$ and describe the structure of the lattice $\bK^{+1}$ with the bilinear form induced by the cup product on $H^4 (\T, \bZ)$.

Our main result here is

\begin{theorem}\label{thmmain}
For a very general ppav $A$ of dimension $5$ with smooth theta divisor $\T$, the anti-invariant primal cohomology $\bK^{-1}$ of $\T$ is a simple Hodge structure of level $2$. 
\end{theorem}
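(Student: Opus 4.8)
The plan is to deduce the irreducibility of $\bK$ from the irreducibility of the monodromy action on the local system underlying it. First I would work over the open locus $\cA_5^{\circ}\subset\cA_5$ parametrizing ppav's with smooth theta divisor, which is irreducible, and over which $\bK$ is the fiber of a polarizable $\bZ$--variation of Hodge structure $\mathbb{V}$ of weight $4$. The reduction rests on the standard principle that for a polarizable $\bQ$--VHS on an irreducible base, if the algebraic monodromy group acts irreducibly on a fiber then the \emph{very general} fiber is an irreducible Hodge structure. Indeed, a proper nonzero sub-Hodge structure of the very general fiber has bounded rank and Hodge numbers, so by the algebraicity of Hodge loci (Cattani--Deligne--Kaplan) the locus where such a sub-structure exists is a countable union of closed algebraic subsets; if it contains the very general point, one of its components is all of $\cA_5^{\circ}$, and the sub-structure spreads out, after a finite \'etale base change, to a sub-VHS $\mathbb{W}\subset\mathbb{V}$. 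Its fibers then form a proper monodromy-invariant subspace, contradicting irreducibility of the monodromy representation.

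The second point is that the monodromy group here is genuinely larger than (a quotient of) $Sp(10)$, which leaves room for it to act irreducibly on the $78$-dimensional space $\bK_{\bQ}=\mathbb{V}$. Unlike $R^{1}\pi_{*}\bQ$ of the universal abelian variety, whose monodromy factors through $Sp(10,\bZ)$, the middle cohomology of $\T$ exists as a local system only over $\cA_5^{\circ}$ and acquires nontrivial monodromy around the discriminant of singular theta divisors. This is already transparent for $g=4$: there the algebraic monodromy group of the analogous local system is the full $Sp(10)$ acting on the irreducible $10$-dimensional $H^{3}$ of the associated cubic threefold, even though $H^{1}$ of the abelian fourfold only sees $Sp(8)$. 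So the irreducibility I am after is consistent with, and forced by, the extra Picard--Lefschetz monodromy supported on the discriminant.

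Third, and this is the heart of the matter, I would compute the algebraic monodromy group of $\mathbb{V}$ and show it acts irreducibly on $\bK_{\bQ}$. Here I would exploit the Tannakian description of Kr\"amer--Weissauer together with the geometric construction of Izadi--Tam\'as--Wang. The constant perverse sheaf on $\T$ has Tannaka dimension equal to $|\chi_{\mathrm{top}}(\T)|=\T^{5}=5!=120$, and the Hodge-theoretic primal part $\bK_{\bQ}$, of dimension $120-\tfrac{1}{6}\binom{10}{5}=120-42=78$, sits inside the corresponding representation together with a complementary piece of Catalan dimension $42$. To bound the monodromy from below I would restrict $\mathbb{V}$ to well-chosen subfamilies --- a generic Lefschetz pencil in $\cA_5^{\circ}$, or a one-parameter degeneration toward the Jacobian locus, where $\T=W_{4}(C)$ and the Picard--Lefschetz transformations around its singularities can be made explicit --- since a subgroup acting irreducibly forces the whole group to act irreducibly. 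Combined with the Kr\"amer--Weissauer identification of the generic Tannaka group, this should identify $\bK_{\bQ}$ with a single irreducible representation.

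The main obstacle will be exactly this last step: separating $\bK$, the $78$-dimensional primal part, cleanly from the complementary $42$-dimensional summand, and then ruling out any finer monodromy-invariant decomposition of the $78$-dimensional space. Nothing is automatic, precisely because the surface whose cohomology realizes $\bK$ in the Hodge-conjecture construction has \emph{reducible} $H^{2}$, so no splitting is excluded on formal grounds; one must show that the classes peeling off the Catalan-dimensional piece do not further split $\bK$, or equivalently that the Hodge endomorphism algebra of the very general $\bK$ is $\bQ$ with multiplicity one. I expect to handle this by feeding the explicit correspondence realizing $\bK$ into the constraint that any sub-Hodge structure of the very general fiber must be monodromy-invariant, thereby reducing irreducibility to a finite representation-theoretic computation with the Tannaka/monodromy group; the level $2$ assertion, by contrast, is already part of the general rank-and-level structure recorded above and requires no new input.
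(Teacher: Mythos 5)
Your opening reduction---from irreducibility of the very general fiber to irreducibility of a monodromy-invariant subspace after a finite \'etale cover---is correct, and it is in fact exactly how the paper concludes (it invokes \cite[Prop.~6]{Schnell-MumfordTate-2011} rather than Cattani--Deligne--Kaplan, but the principle is the same). Everything after that, however, is a plan rather than a proof, and the plan as stated does not survive scrutiny. The Kr\"amer--Weissauer Tannaka group is attached to the convolution product of perverse sheaves on a single fixed $(A,\T)$; it is not the algebraic monodromy group of the variation of Hodge structure over $\cA_5\setminus N_0$, and no implication between irreducibility of the one and of the other is available---indeed the paper cites \cite[2.9]{KramerWeissauer-theta} precisely because, from that viewpoint and from the reducibility of the cohomology of the surface realizing $\bK$ in \cite{IzadiTamasWang}, the theorem looks unexpected. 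Your other proposed source of monodromy also fails: for $g=5$ the Jacobian locus lies inside $N_0$ and has codimension $3$ in $\cA_5$, and $W_4(C)$ has a one-dimensional singular locus, so there is no Picard--Lefschetz transformation ``around'' it to make explicit; while around the divisor $N_0$ itself the paper shows the local monodromy is essentially trivial---of order two around $\theta_{null}$, and trivial after a degree-two base change around $N_0'$---so it dies on a finite cover and can never force irreducibility through the Mumford--Tate argument, which only sees the identity component of the algebraic monodromy group.

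The idea you are missing is to go to the \emph{boundary} of $\cA_5$ rather than deeper into $N_0$: the paper takes a Lefschetz pencil in the Mori--Mukai family \cite{MoriMukai83} of Prym varieties of curves on an Enriques surface, which meets the divisor $\Delta$ of rank-one degenerations in $42$ points. The Clemens--Schmid sequence at each such point produces a nilpotent operator $N_i$ on $\bK_t$ with $N_i^2=0$ whose image contains a $10$-dimensional sub-Hodge structure $\bH_i$, namely the primal cohomology of the theta divisor of a four-dimensional ppav, which is an \emph{irreducible} Hodge structure by the $g=4$ theorem of \cite{IzadiVanStraten}. The proof then establishes three concrete facts---$\bK_t=\sum_{i=1}^{42}\bH_i$, $\bigcap_{i}\Ker(N_i)=0$, and all the $\bH_i$ are conjugate under $\rho(\pi_1(\cV))$---and feeds them into your step-one reduction: a nonzero sub-Hodge structure $\bF_t\subset\bK_t$ is stable under every $N_i$ (since $\exp(N_i)$ lies in the identity component of the algebraic monodromy group), so some $0\ne N_i(x)\in\bF_t\cap\bH_i$, hence $\bH_i\subset\bF_t$ by irreducibility of $\bH_i$, hence every $\bH_j\subset\bF_t$ by conjugacy, hence $\bF_t=\bK_t$. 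This inductive use of the $g=4$ case to supply irreducible building blocks is what replaces the ``finite representation-theoretic computation'' you defer to; without it, or an actual determination of the monodromy group (which the paper never needs), your proposal has no way to carry out its main step.
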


Here of course, by simple we mean that the only Hodge substructures of $\bK^{-1}$ are $\{0\}$ and those of finite index. The above theorem is equivalent to the fact that the only Hodge substructures of $\bK^{-1}_{\bQ}$ are $\{0\}$ and itself. It is this latter statement that we will prove.

The above theorem simplifies the proof of the Hodge conjecture in \cite{IzadiTamasWang}: it is no longer necessary to show that the image of the Abel-Jacobi map in \cite{IzadiTamasWang} contains all of $\bK_{\bQ}$, only that it contains $\bK^+_\bQ$ and intersects $\bK^-_{\bQ}$ non-trivially.

If $A$ is replaced by a projective space and $\Theta$ by a smooth hypersurface, then the primitive and the primal cohomology coincide. The primitive cohomology of a general hypersurface is simple (see, e.g., \cite[7.3]{Lamotke-Lefschetz-1981}).

Our strategy, explained below, for proving Theorem \ref{thmmain} is to use the Mori-Mukai proof \cite{MoriMukai83} of the unirationality of $\cA_5$.

Let $T$ be an Enriques surface and
\[
f: S \lra T
\]
the K3 \'etale double cover corresponding to the canonical class (which is $2$-torsion) $K_T\in \Pic(T)$.
Let $H$ be a very ample line bundle on $T$ with $H^2=10$. A general element in the linear system $|H|\cong\bP^5$ is a smooth curve of genus $6$ and such smooth curves are parametrized by the Zariski open subset $|H|\setminus \cD$, where $\cD$ is the dual variety of the embedding of $T$ in $|H|^*$. For each element $u\in |H|\setminus \cD$, we obtain a nontrivial \'etale double cover $D_u:=f^{-1}(C_u)\ra C_u$. Associating to such a cover its Prym variety $P(D_u,C_u)$ defines a morphism from $|H|\setminus \cD$ to $\cA_5$:
\[
\xymatrix{|H|\setminus \cD\ar[rr]^{\mu_H}\ar[rd]_-{\cP_H}&&\cR_6\ar[ld]^-{\mathcal{P}}\\
&\cA_5&}.
\]
The linear systems $|H|$ form a projective bundle $\cH$ over the moduli space of Enriques surfaces and the maps $\mu_H$ are restrictions of a rational map
\begin{equation}\label{eqmu}
\mu : \cH \lra \cR_6.
\end{equation}
Mori and Mukai \cite{MoriMukai83} showed that $\mu$ is dominant.

The ppav $(A,\T)$ with singular theta divisor form the Andreotti-Mayer divisor $N_0$ in $\cA_5$ (\cite{Beauville1977-1}). The divisor $N_0$ has two irreducible components $\theta_{null}$ and $N_0'$ (\cite{Debarre921},\cite{MumfordKodAg-1983})) (as divisors, $N_0=\theta_{null}+2N_0'$). The theta divisor of a general point  $(A,\T)\in\theta_{null}$  has a unique node at a two-torsion point while the theta divisor of a general point in $N'_0$ has two distinct nodes $x$ and $-x$. 

The primal cohomologies of the theta divisors form a variation of (polarized) Hodge structures over $\cU:=|H|\setminus (\cD\cup \overline{\cP_H^{-1}(N_0)})$. Inspired by \cite[7.3]{Lamotke-Lefschetz-1981}, we prove Theorem \ref{thmmain} via a detailed study of the monodromy representation
\[
\rho:\pi_1(\cU)\lra Aut(\bK^{-1}_\bQ,\langle,\rangle)
\]
where $\langle, \rangle$ is the natural polarization on $\bK^{-1}_{\bQ}$ induced by the intersection pairing on $H^4 (\Theta, \bQ)$.

Our argument has two mainly independent parts.

In the first part, starting from a Lefschetz pencil $l\cong\bP^1\subset |H|$ of curves of degree $10$ in $T$, we construct a family of Prym varieties with their theta divisors (Section \ref{secpencil})
\[
\xymatrix{\Theta\ar[r]\ar[rd]&A\ar[d]\\
&l.}
\]
We show that the double covers of the singular curves of the pencil are irreducible (Proposition \ref{propirreducible}) and use this to compactify the semi-abelian Prym varieties of these double covers and their theta divisors (Paragraph \ref{subseccompPrym}). We begin Section \ref{secsmooth} by computing the number (counted with multiplicities) of smooth curves in our pencil whose Prym varieties have singular theta divisors. These singular theta divisors have at worst two ordinary double points. In Lemma \ref{lemmult1} we show that all these fibers occur with multiplicity $1$ in the family. We then combine this, in Proposition \ref{proptotsmooth}, with a calculation at the level of tangent spaces to the singular Pryms of our family, to show that the total spaces $A$ and $\Theta$ are smooth.

The second part of our argument consists of the study of the local monodromy $N_i$ on
\[
\bK_t^{-1} := \bK^{-1}_{\bQ,t} \subset H^{g-1} (\Theta_t, \bQ)
\]
for $t$ near a point $p_i\in l$ where the theta divisor $\Theta_{p_i}$ is singular. We first show in Section \ref{secN0monodromy} that, when $p_i\in N_0$, the monodromy around $p_i$ is an involution. To study the monodromy when the Prym variety at $p_i$ degenerates, we use the Clemens-Schmid sequence. In Section \ref{secClemensSchmid} we recall the basic facts that we need about the Clemens-Schmid sequence. Next, in Section \ref{secboundary} we compute the graded pieces of the limit mixed Hodge structures on $H^4 (A_t), H^4 (\T_t)$ and $\bK_t$. In particular, we show that the limit mixed Hodge structure $\bK_{\lim}^{i,-1}$ on $\bK_t^{-1}$ contains a copy $\bH_i = N_i (\bK_t)$ of the primal cohomology of the theta divisor of a general abelian fourfold which is simple by \cite{IzadiVanStraten}. In Section \ref{sec-Id} we compute the action of $-Id$ on the primal cohomology and show that $\bK_\bQ^{+1}$ is contained in the kernel of $N_i$. We also show that, for all $g$, the Hodge structure $\bK^{(-1)^{g-1}}$ has level $g-5$ (Corollary \ref{corg-5}). In Section \ref{secglobalmono} we prove our main theorem. We show how it essentially follows from the global invariant cycles theorem that $\cap_{i=1}^{42} \Ker N_i = \{0\}$ (Lemma \ref{lemker}) and deduce from this that $\bK_t^{-1} = \sum_{i=1}^{42} \bH_i$ (Corollary \ref{corprim}). Now, given a nontrivial Hodge substructure $\bF_t$ of $\bK_t^{-1}$, our argument can be summarized as follows
\[
0 \neq x\in \bF_t \Longrightarrow \exists \; i \; \colon 0 \neq N_i (x) \in N_i (\bK_t^{-1}) \cap \bF_t = \bH_i \cap \bF_t \Longrightarrow \bH_i \subset \bF_t.
\]
We show that $\bF_t$ is globally invariant under the action of the monodromy group (end of Section \ref{secglobalmono}) which permutes the $\bH_i$ transitively (Lemma \ref{lemconjugate}), therefore all the $\bH_i$ and hence $\bK_t^{-1}$ are contained in $\bF_t$.

\section{Prym varieties associated to a Lefschetz pencil }\label{secpencil}

\subsection{A pencil of double covers}\label{subsecpencil}


We denote by
\[
\tau:S\lra S
\]
the fixed point free covering involution such that $S/\tau\cong T$. By \cite[Prop. 2.3] {Namikawa-Enriques-1985} the invariant subspace of the involution $\tau^*$ acting on the N\'eron Severi group $NS(S)$ is equal to $f^*(NS(T))$. Since the pullback
\[
f^*: NS(T)\lra NS(S)
\]
is injective, we deduce that $f^*(NS(T))$ is a rank $10$ primitive sublattice in $NS(S)$. It follows that the Picard number of $S$ is greater than or equal to $10$. By \cite[Prop. 5.6] {Namikawa-Enriques-1985}, when $T$ is general in moduli,
\begin{eqnarray}\label{general}NS(S)=f^*NS(T).
\end{eqnarray}
{\bf Hypothesis:} Throughout this paper, we will assume $T$ satisfies (\ref{general}).

Suppose $l\cong\bP^1\subset |H|$ is a Lefschetz pencil, i.e., it is transverse to the dual variety $\cD$. Then the singular curves of the pencil consist of finitely many irreducible nodal curves. Denote by $\widetilde{T}:=Bl_{10}T$ (resp. $\widetilde{S}:=Bl_{20}S$) the blow-up of $T$ (resp. $S$) along the base locus of $l$ (resp. $f^*l$). We obtain a family of \'etale double covers parametrized by $l$:
\[
\xymatrix{\widetilde{S}\ar[rr]^{\widetilde{f}}\ar[rd]_{\pi'}&&
\widetilde{T}\ar[ld]^-{\pi}\\
&l.&}
\]
\begin{proposition}\label{propsingular}
There are $42$ singular fibers in the family $\widetilde{T}\stackrel\pi\lra l$.
\end{proposition}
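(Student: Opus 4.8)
The plan is to determine the number $\delta$ of singular fibers by computing the topological Euler characteristic $e(\widetilde{T})$ in two independent ways and comparing. First I would record the geometry of the singular fibers: since $l$ is a Lefschetz pencil, i.e. transverse to the dual variety $\cD\subset|H|$, every member of the pencil meeting $\cD$ does so at a point where $l$ crosses $\cD$ transversally, and the corresponding curve acquires exactly one ordinary node. Thus $\delta$ is precisely the number of such intersection points, and each singular fiber is a curve of arithmetic genus $6$ with a single node. I would also note that the $10$ base points of a general pencil (namely $H^2=10$ distinct reduced points, since $H$ is very ample) are exactly the centers of the blow-up $\widetilde{T}=Bl_{10}T\to l$.

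For the first computation I would use the blow-up formula. An Enriques surface has $e(T)=12$ (Noether's formula gives $c_2(T)=12\,\chi(\cO_T)=12$), and blowing up a reduced point on a surface raises the Euler characteristic by $1$. Hence
\[
e(\widetilde{T})=e(T)+10=12+10=22 .
\]
For the second computation I would exploit the additivity of the Euler characteristic for the fibration $\pi\colon\widetilde{T}\to l\cong\bP^1$. A general fiber is a smooth curve of genus $6$, so $e(C_{\mathrm{gen}})=2-2\cdot 6=-10$. Each singular fiber is a one-nodal curve: its normalization has genus $5$, and gluing the two preimages of the node yields $e=(2-2\cdot 5)-1=-9$, exactly $1$ more than a smooth fiber. (Note this jump of $+1$ holds for any one-nodal member, so reducibility of the singular fibers would not affect the count.) Therefore
\[
e(\widetilde{T})=e(\bP^1)\cdot e(C_{\mathrm{gen}})+\delta\cdot 1=2\cdot(-10)+\delta=-20+\delta .
\]
Comparing the two expressions gives $22=-20+\delta$, whence $\delta=42$.

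The routine steps are the two Euler-characteristic evaluations; the point requiring care is the input geometry rather than the arithmetic. Specifically, the argument rests on three facts that the Lefschetz/very-ample hypotheses must guarantee: that each singular member has exactly one node (so the local contribution is $+1$), that the general member is smooth of genus $6$ (via adjunction $2g-2=H^2+H\cdot K_T=10$, using $K_T$ numerically trivial on an Enriques surface), and that the $10$ base points are distinct so the blow-up correction is exactly $10$. Once these are in place, equating the two computations forces $\delta=42$, which is the assertion of Proposition~\ref{propsingular}.
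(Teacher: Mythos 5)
Your proof is correct and follows essentially the same route as the paper: both compute $\chi_{top}(\widetilde{T})$ two ways, once as $\chi_{top}(T)+10=22$ via the blow-up at the $10$ base points, and once as $\chi_{top}(\bP^1)\chi_{top}(C)+N=-20+N$ via the fibration over $l$, giving $N=42$. Your write-up simply makes explicit the inputs the paper leaves implicit (Noether's formula for $\chi_{top}(T)=12$, adjunction for $g=6$, and the $+1$ local contribution of each one-nodal fiber).
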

\begin{proof}
We use the formula
\[
\chi_{top}(\widetilde{T})=\chi_{top}(T)+10=\chi_{top}(\bP^1)\chi_{top}(C)+N,
\]
where $C$ is a smooth fiber in the pencil and $N$ is the number of singular fibers. Since $\chi_{top} (T) = 12$ and $\chi_{top} (C) = -10$, we obtain $N=42$.
\end{proof}

Denote by $C_t$ the fiber over $t\in l$ of $\pi$ and $D_{t}$ the corresponding \'etale double cover in $\widetilde{S}$ and $\{s_i\in l:i=1,...,42\}$ the $42$ points where $\pi$ is singular.

\begin{proposition}\label{propirreducible}
For any $t\in l$, the \'etale double cover $D_t$ of $C_t$ is an irreducible curve. 
\end{proposition}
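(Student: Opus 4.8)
The plan is to prove that $f^{-1}(C_t)$ does not split off a copy of $C_t$ — equivalently, that $C_t$ does not lift to $S$ — and to obtain the needed contradiction from Hypothesis (\ref{general}) together with Namikawa's description of the $\tau^*$-invariant part of $NS(S)$. Since $D_t$ is the proper transform in $\widetilde S$ of $f^{-1}(C_t)\subset S$ under the blow-up of the finitely many base points of $f^*l$, which lie in general position on the members of the pencil, the curve $D_t$ is irreducible if and only if $f^{-1}(C_t)$ is; so I would work with $f^{-1}(C_t)$ in $S$.

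First I would record the combinatorial constraints. The morphism $f^{-1}(C_t)\to C_t$ is finite étale of degree $2$ (in particular $f^{-1}(C_t)$ is reduced), and $C_t$ is irreducible for every $t\in l$, the singular members of a Lefschetz pencil being irreducible nodal curves. Hence every irreducible component of $f^{-1}(C_t)$ dominates $C_t$, and the degrees of these component maps sum to $2$; consequently $f^{-1}(C_t)$ has at most two components, and if it has exactly two, say $Z_1$ and $Z_2$, then each maps to $C_t$ with degree one. The covering involution $\tau$ preserves $f^{-1}(C_t)$ and therefore either fixes each $Z_i$ or interchanges them. If $\tau$ fixed $Z_1$, then, since $Z_1\to C_t$ has degree one and $f\circ\tau=f$, the restriction $\tau|_{Z_1}$ would cover the identity of $C_t$ and hence equal the identity, contradicting the fact that $\tau$ is fixed point free. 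So the only way $f^{-1}(C_t)$ can be reducible is that $\tau$ interchanges $Z_1$ and $Z_2$.

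The heart of the argument is then a parity obstruction in $NS(S)$. By (\ref{general}) and \cite[Prop. 2.3]{Namikawa-Enriques-1985}, all of $NS(S)$ equals $f^*NS(T)$, which is precisely the $\tau^*$-invariant part; thus $\tau^*$ acts as the identity on $NS(S)$. In the remaining case $Z_2=\tau(Z_1)$ this forces $[Z_1]=\tau^*[Z_1]=[Z_2]$ in $NS(S)$, whence
\[
f^*H=[Z_1]+[Z_2]=2[Z_1].
\]
Taking self-intersections and using that $f$ is étale of degree $2$, so $(f^*H)^2=2H^2=20$, gives $[Z_1]^2=5$, which is impossible because $NS(S)$, being contained in the K3 lattice of $S$, is even. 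This contradiction shows $f^{-1}(C_t)$, and hence $D_t$, is irreducible.

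I expect the main subtlety to be the treatment of the singular fibers $C_{s_i}$: there one cannot invoke the Lefschetz hyperplane theorem for $\pi_1$ to conclude connectedness, and it is the divisor-class argument above that handles the smooth and the nodal fibers uniformly. I would also emphasize that Hypothesis (\ref{general}) is genuinely needed: a purely numerical computation does not exclude a disjoint lift, since one checks that $[Z_1]^2=10$ would then be consistent with adjunction, and it is exactly the triviality of $\tau^*$ on $NS(S)$ that produces the odd self-intersection $5$ and the contradiction.
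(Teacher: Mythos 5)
Your proof is correct and takes essentially the same route as the paper's: the covering involution must swap the two components, the hypothesis $NS(S)=f^*NS(T)$ forces their classes to coincide so that $f^*H$ is $2$-divisible, and this contradicts $H^2=10$. The only cosmetic differences are that you rule out $\tau$ fixing a component via fixed-point-freeness instead of naming the trivial/Wirtinger covers, and you derive the final contradiction from evenness of the K3 lattice ($[Z_1]^2=5$) rather than from the non-$2$-divisibility of $H$ in $NS(T)$.
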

\begin{proof} Suppose $D_t$ is reducible for some $t$.  If $C_t$ is smooth, $D_t$ must be the trivial cover. If $C_t$ has one node, $D_t$ is either the trivial cover or the Wirtinger cover. 
 In either case, the involution $\iota$ permutes the two components $D_t^1$ and $D_t^2$ of $D_t$. By (\ref{general}), the class of $D_t^i$ in $NS(S)$ is $\iota$ invariant, thus $D_t^1$ and $D_t^2$ have the same class in $NS(S)$ and $H = 2 D_t^1$.  However, since $H^2=10$, the class of $H$ in $NS(T)$ is not $2$-divisible, a contradiction.
\end{proof}

\begin{corollary}\label{corcover}
For a singular fiber $C_{s_i}=C_{pq}:=\frac{C}{\{p\sim q\}}$ in the pencil $l$, the \'etale double cover $D_{s_i}:=D_{pq}$ is obtained by glueing $p_i$ with $q_i$ for $i=1,2$ on a nontrivial \'etale double cover $D$ of $C$, where $p_i,q_i\in D$ are the inverse images of $p,q\in C$ respectively.
\end{corollary}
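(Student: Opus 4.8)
The plan is to realize the nodal cover $D_{pq} = f^{-1}(C_{pq})$ directly as a pullback of the \'etale double cover $f\colon S \to T$ and then read off both its normalization and its gluing data from the local structure of $f$. First I would record that, since the pencil is transverse to $\cD$, the singular fiber $C_{s_i} = C_{pq}$ is an irreducible curve with a single node, which is the image of a point $x \in T$, and that $C$ is its normalization with $p, q \in C$ the two points lying over $x$. Because $f$ is \'etale of degree $2$, the fiber $f^{-1}(x)$ consists of two distinct points $y_1, y_2 \in S$.

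Next I would analyze $D_{pq} = f^{-1}(C_{pq})$ locally. Since $f$ is \'etale, it is a local isomorphism near each $y_i$, so in a neighborhood of $y_i$ the curve $D_{pq}$ is isomorphic to $C_{pq}$ near $x$; in particular $D_{pq}$ has exactly two nodes, located at $y_1$ and $y_2$, and is smooth elsewhere. The key structural claim is that the normalization of $D_{pq}$ is the \'etale double cover $D \to C$ obtained by pulling back $f$ along the composite $C \to C_{pq} \hookrightarrow T$. This follows because normalization commutes with \'etale base change: $D = C \times_T S$ is \'etale over the smooth curve $C$, hence smooth, and the induced finite map $D \to D_{pq}$ is an isomorphism away from the two nodes, so it is the normalization.

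The gluing data is then a matter of bookkeeping with the two sheets of $f$. A simply connected neighborhood $V$ of $x$ in $T$ has $f^{-1}(V) = U_1 \sqcup U_2$ with $y_i \in U_i$; labeling the preimages of $p$ and $q$ by the sheet containing them yields $p_i, q_i \in U_i$. The node $y_i$ of $D_{pq}$ lies in $U_i$ and is precisely the image of the two branches through $p_i$ and $q_i$ under $D \to D_{pq}$. Hence $D_{pq}$ is obtained from $D$ by gluing $p_i$ to $q_i$ for $i = 1, 2$, as asserted. To conclude that $D$ is a nontrivial cover I would invoke the preceding proposition: $D_{pq}$ is irreducible, so its normalization $D$ is irreducible, hence connected, which forces the double cover $D \to C$ to be nontrivial.

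I would expect the only genuinely delicate point to be the correct matching of the gluing, i.e. verifying that the two branches meeting at the node $y_i$ are $p_i$ and $q_i$ carrying the \emph{same} sheet index, rather than the crossed pairing $p_1 \sim q_2$, $p_2 \sim q_1$. This is settled by the local triviality of $f$ over the simply connected $V$: within a single sheet $U_i$ the two lifted branches through $p_i$ and $q_i$ stay in $U_i$ and meet only at $y_i$, so no switching of sheets can occur at the node, which pins down the consistent labeling near $x$.
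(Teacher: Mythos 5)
Your proof is correct, but it takes a genuinely different route from the paper's. The paper argues abstractly through the classification of \'etale double covers of a nodal curve: such covers correspond to $2$-torsion points of $\Pic^0(C_{pq})$, which sit in the exact sequence $1 \to \bZ_2 \to \Pic^0(C_{pq})_2 \stackrel{\nu^*}{\to} \Pic^0(C)_2 \to 0$ whose kernel is generated by the class of the Wirtinger cover; irreducibility of $D_{pq}$ (the preceding proposition) rules out the trivial and Wirtinger classes, so the defining $2$-torsion point restricts to a nonzero $2$-torsion point on the normalization $C$, which is exactly the assertion that $D_{pq}$ is glued from a nontrivial cover $D$ of $C$. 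You instead exploit the geometric origin of the cover: $D_{pq}=f^{-1}(C_{pq})$ is a pullback of the Enriques cover $f\colon S\to T$, its normalization $D=C\times_T S$ is computed via the fact that normalization commutes with \'etale base change, and the gluing data is read off from the local triviality of $f$ over a simply connected neighborhood of the node; nontriviality again comes from the preceding proposition, since the normalization of an irreducible curve is irreducible, hence connected. Both arguments are sound and both lean on the same irreducibility input. What your approach buys: it identifies $D$ concretely as the restriction of the K3 cover to the normalization, and it pins down \emph{which} of the two lifts of the gluing (they differ by the Wirtinger twist) actually occurs, namely the sheet-consistent one. Note, though, that this last delicate point is not needed for the statement as written: since the labels $p_i,q_i$ are not canonical, any gluing matching the fiber over $p$ bijectively with the fiber over $q$ has the stated form after relabeling. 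What the paper's approach buys is brevity: the entire proof is the exact sequence plus irreducibility, with no local analysis at all.
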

\begin{proof}The \'etale double cover $D_{pq}$ of $C_{pq}$ is determined by a $2$-torsion point in $\Pic^0(C_{pq})$. The statement follows immediately from the irreducibility of $D_{s_i}$ and the exact sequence
\[
\xymatrix{1\ar[r]&\bZ_2\ar[r]&\Pic^0(C_{pq})_2\ar[r]^-{\nu^*}&\Pic^0(C)_2\ar[r]&0},\]
where $\nu:C\ra C_{pq}$ is the normalization map and the kernel of $\nu^*$ is generated by the point of order $2$ corresponding to the Wirtinger cover.
\end{proof}

\subsection{The compactified Prym variety}\label{subseccompPrym}
We describe the compactified Prym variety for the cover $D_{pq}\ra C_{pq}$ as in Corollary \ref{corcover}. The semiabelian part $G_{pq}$ of the Prym variety is the identity component $\Ker^0(Nm_{pq})$ of $\Ker(Nm_{pq})\subset \Pic^0(D_{pq})$ in the following commutative diagram with exact rows and columns
\[
\xymatrix{& 0\ar[d] & 0\ar[d] & 0\ar[d] \\
1\ar[r]&\bC^*\ar[r]\ar[d]&\Ker(Nm_{pq})\ar[r]\ar[d]&\Ker(Nm)\ar[r]\ar[d]&0\\
1\ar[r]&(\bC^*)^2\ar[d]\ar[r]&\Pic^0(D_{pq})\ar[d]^-{Nm_{pq}}\ar[r]&\Pic^0(D)\ar[r]\ar[d]^-{Nm}&0\\
1\ar[r]&\bC^*\ar[r]\ar[d]&\Pic^0(C_{pq})\ar[r]\ar[d]&\Pic^0(C)\ar[r]\ar[d]&0\\
 & 0 & 0 & 0.}
\]
It follows immediately that the group scheme $G_{pq}$ is a $\bC^*$-extension of the Prym variety $(B,\Xi):=\Prym(D,C)$:
\[
\xymatrix{1\ar[r]&\bC^*\ar[r]&G_{pq}\ar[r]&B\ar[r]&0}.
\]
Let $p:P^\nu\ra B$ be the unique $\bP^1$ bundle containing $G_{pq}$ and write $P^\nu\setminus G_{pq}=B_0\amalg B_\infty$, where $B_0$ and $B_\infty$ are the zero and infinity sections of $P^\nu$.

The compactified `rank one degeneration' $P$ is constructed as follows (c.f. \cite[\S 1]{MumfordKodAg-1983}). 

\begin{enumerate}
\item On $P^\nu$, we have the linear equivalence $B_0-B_\infty\backsim_{lin}p^{-1}(\Xi-\Xi_b)$ for a unique $b\in B$. Thus
\[
B_0+p^{-1}\Xi_b\backsim_{lin}B_\infty+p^{-1}\Xi.
\]

\item Let $L^\nu:=\cO_{P^\nu}(B_0+p^{-1}\Xi_b)$. Then $L^\nu|_{B_0}\cong\cO_B(\Xi)$ and $L^\nu|_{B_\infty}\cong \cO_B(\Xi_b)$. Via the Leray spectral sequence for $p$, we see that $h^0(P^\nu,L^\nu)=2$ and $B_0+p^{-1}\Xi_b$, $B_\infty+p^{-1}\Xi$ span $|L^\nu|$.

\item The compactified Prym variety ${P}$ is constructed from $P^\nu$ by identifying the zero section $B_0 \stackrel{p}{\cong} B$ with the infinity section $B_\infty \stackrel{p}{\cong} B$ via translation by $b\in B$. We also denote by $\nu : P^{\nu} \ra P$ the normalization morphism.

\item The line bundle $L^\nu$ descends to a line bundle $L$ on $P$, i.e., $\nu^* L \cong L^{\nu}$. The linear system $|L|$ is a point.

\item The theta divisor $\Upsilon\subset P$ is the unique divisor in $|L|$.

\end{enumerate}

\begin{remark}\label{rktrivial}The $\bP^1$ bundle $P^\nu\ra B$ contains an open subset $P^\nu\setminus B_\infty$ (resp. $P^\nu\setminus B_0$), which is isomorphic to the total space of $N_{B_0|P^\nu}\cong\cO_{B_0}(B_0)\cong\cO_B(\Xi-\Xi_b)$ (resp. $\cO_B(\Xi_b-\Xi)$). We conclude that $P^\nu\cong \bP_B(\cO_B(\Xi-\Xi_b)\oplus\cO_B(\Xi_b-\Xi))$. In particular $G_{pq}\ra B$ and $P^\nu\ra B$ are {\bf topologically trivial} $\bC^*$ and $\bP^1$ bundles, respectively.
\end{remark}

\begin{proposition}\label{propdegtheta}For a general rank one degeneration, the normalization $\Upsilon^\nu$ of the theta divisor is isomorphic to $Bl_{\Xi\cap\Xi_b}B\subset P^\nu$, the theta divisor $\Upsilon\subset P$ is obtained from $\Upsilon^\nu$ by identifying the proper transforms of $\Xi$ and $\Xi_b$.
\end{proposition}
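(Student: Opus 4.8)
The plan is to realize $\Upsilon^\nu = \nu^{-1}(\Upsilon)$ as the unique member of the pencil $|L^\nu|$ that descends to $P$, and then to analyze it fibrewise over $B$. Write $s_0, s_\infty$ for the sections of $L^\nu$ with divisors $B_0 + p^{-1}\Xi_b$ and $B_\infty + p^{-1}\Xi$, so that $|L^\nu| = \langle s_0, s_\infty\rangle$. Restricting to the two sections gives $s_0|_{B_0} = 0$ and $s_\infty|_{B_\infty} = 0$, while $s_\infty|_{B_0}$ is the canonical section $\theta_\Xi$ of $L^\nu|_{B_0}\cong\cO_B(\Xi)$ and $s_0|_{B_\infty}$ is the canonical section $\theta_{\Xi_b}$ of $L^\nu|_{B_\infty}\cong\cO_B(\Xi_b)$. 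Hence for $s^\nu = \lambda s_0 + \mu s_\infty$ one has $s^\nu|_{B_0} = \mu\,\theta_\Xi$ and $s^\nu|_{B_\infty} = \lambda\,\theta_{\Xi_b}$. Since $P$ glues $B_0$ to $B_\infty$ by translation by $b$ and $\Xi_b$ is the translate of $\Xi$ by $b$, the descent datum matches these two restrictions under the gluing isomorphism; neither $s_0$ nor $s_\infty$ descends, so the descending member is unique with $\lambda\mu\neq 0$, recovering that $|L|$ is a point, and $\Upsilon^\nu = \{s^\nu = 0\}$.

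I would then compute $\Upsilon^\nu$ on each fibre of $p\colon P^\nu\to B$, where $L^\nu$ restricts to $\cO_{\bP^1}(1)$. In a fibre over $x$, $s_0$ restricts to a section vanishing at the point $B_0$ when $x\notin\Xi_b$ and vanishing identically when $x\in\Xi_b$, and symmetrically $s_\infty$ vanishes at $B_\infty$ unless $x\in\Xi$, in which case it vanishes identically. Reading off $s^\nu = \lambda s_0 + \mu s_\infty$ then shows: over $B\setminus(\Xi\cup\Xi_b)$, $\Upsilon^\nu$ is the graph of a section meeting neither $B_0$ nor $B_\infty$; over $\Xi\setminus\Xi_b$ it meets the fibre exactly in $B_0$; over $\Xi_b\setminus\Xi$ it meets it exactly in $B_\infty$; and over $W := \Xi\cap\Xi_b$ the whole fibre lies in $\Upsilon^\nu$. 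In particular $p|_{\Upsilon^\nu}\colon\Upsilon^\nu\to B$ is an isomorphism over $B\setminus W$ with $\bP^1$-fibres over $W$, and $\Upsilon^\nu\cap B_0 = \Xi$, $\Upsilon^\nu\cap B_\infty = \Xi_b$ as divisors in $B_0\cong B\cong B_\infty$.

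To identify $\Upsilon^\nu$ with $Bl_W B$ I would run a local computation at a point of $W$. For a \emph{general} rank one degeneration $\Xi$ and $\Xi_b$ are smooth there and meet transversally along the smooth codimension-two $W$, so I may take local coordinates on $B$ with $\Xi = \{u=0\}$, $\Xi_b = \{v=0\}$, and fibre coordinates $[z_0:z_\infty]$ with $B_0 = \{z_\infty = 0\}$, $B_\infty = \{z_0 = 0\}$; after trivializing $L^\nu$ the equation of $\Upsilon^\nu$ takes the bilinear form $\mu\,u\,z_0 + \lambda\,v\,z_\infty = 0$. Its differential never vanishes (the $u$- and $v$-derivatives are $\mu z_0$ and $\lambda z_\infty$, which cannot both vanish on $\bP^1$), so $\Upsilon^\nu$ is smooth, and the displayed equation is exactly the incidence variety of the blow-up of $\{u=v=0\}$; hence $p|_{\Upsilon^\nu}$ is the blow-down $Bl_W B\to B$. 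Tracing the equation, the strict transform $\widetilde\Xi$ is cut out by $u = 0,\ z_\infty = 0$ and coincides with $\Upsilon^\nu\cap B_0$, and likewise $\widetilde{\Xi_b}$ coincides with $\Upsilon^\nu\cap B_\infty$; the exceptional $\bP^1$ over $x\in W$ meets $B_0$ and $B_\infty$ in the points of $\widetilde\Xi$ and $\widetilde{\Xi_b}$ over $x$.

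Finally, since $\nu$ is an isomorphism away from $B_0\amalg B_\infty$ and glues $B_0$ to $B_\infty$ by translation by $b$, its restriction to $\Upsilon^\nu$ is the normalization of $\Upsilon$ and identifies $\widetilde\Xi = \Upsilon^\nu\cap B_0$ (lying over $\Xi\subset B_0$) with $\widetilde{\Xi_b} = \Upsilon^\nu\cap B_\infty$ (lying over $\Xi_b\subset B_\infty$), compatibly with the translation $\Xi\to\Xi_b$. This yields the asserted description of $\Upsilon$. I expect the principal obstacle to be the genericity input: one must ensure that for a general degeneration $W = \Xi\cap\Xi_b$ is a smooth codimension-two centre disjoint from $\mathrm{Sing}(\Xi)\cup\mathrm{Sing}(\Xi_b)$, since this transversality is precisely what makes the local equation that of a blow-up and forces $\Upsilon^\nu$ to be smooth.
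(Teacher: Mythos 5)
Your proposal is correct and is essentially the paper's own proof: both exhibit $\Upsilon^\nu$ as the zero divisor of the unique member $\lambda s_0+\mu s_\infty$ of the pencil $|L^\nu|$ that descends to $P$, analyze its intersection with each fiber of $p$ (one reduced point off $\Xi\cap\Xi_b$, the whole $\bP^1$ over $\Xi\cap\Xi_b$), and identify it with $Bl_{\Xi\cap\Xi_b}B$ glued along the proper transforms of $\Xi$ and $\Xi_b$. The only, harmless, divergence is in the identification step: the paper argues globally---$\Upsilon^\nu$ contains $p^{-1}(\Xi\cap\Xi_b)$ via the base locus of the pencil, hence equals $m[Bl_{\Xi\cap\Xi_b}B]$ as a divisor, with $m=1$ because $(\sigma_0+\sigma_\infty)|_{B_0}$ is reduced---whereas you verify the bilinear blow-up equation in local coordinates using transversality of $\Xi$ and $\Xi_b$ for a general degeneration, which has the added benefit of making explicit the smoothness of $\Upsilon^\nu$ (hence that it genuinely is the normalization of $\Upsilon$), a point the paper leaves implicit.
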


\begin{proof} Let $\sigma_0,\ \sigma_\infty$ be elements of $H^0(P^\nu,L^\nu)$, such that $div(\sigma_0)= B_0+p^{-1}\Xi_b$ and $div(\sigma_\infty)= B_\infty+p^{-1}\Xi$.  After rescaling, we may assume, under the natural identification $B_0 \stackrel{p}{\cong} B \stackrel{p}{\cong} B_\infty$, that $\sigma_0|_{B_\infty}$ and $\sigma_\infty|_{B_0}$ differ by translation by $b$. Then $\sigma_0+\sigma_\infty$ descends to a section of $L$. Since $(\sigma_0+\sigma_\infty)|_{B_0}$ vanishes precisely on $\Xi$ and $(\sigma_0+\sigma_\infty)|_{B_\infty}$ vanishes precisely on $\Xi_b$, we conclude that for $u\in B\setminus(\Xi\cap\Xi_b)$, $0\ne(\sigma_0+\sigma_\infty)|_{p^{-1}(u)}\in H^0(\cO_{\bP^1}(1))$.  Thus $\Upsilon^\nu:=div(\sigma_0+\sigma_\infty)$ maps one-to-one to $B$ away from $\Xi\cap\Xi_b$. On the other hand, the base locus of the pencil $|L^\nu|$ is clearly $p^{-1}(\Xi\cap\Xi_b)$. Thus $\Upsilon^\nu=m[Bl_{\Xi\cap\Xi_b}B]$, for some integer $m$, as divisors in $P^\nu$. Since $(\sigma_0+\sigma_\infty)|_{B_0}$ is reduced, $m=1$.
\end{proof}

To summarize, we have the family of (compactified) Prym varieties and theta divisors
\[
\T\lra A\lra l.
\]

\section{The smoothness of the total spaces $A$ and $\Theta$}\label{secsmooth}

Our main goal in this section is to prove that the total spaces $A$ and $\Theta$ of the family of compactified Prym varieties constructed in the previous section are smooth. We begin with some numerical computations, determining the numbers of singular theta divisors in the family.

The family of compactified Prym varieties defines a morphism $\rho : l\ra \tilde{\cA}_5$ where  $\tilde{\cA}_5$ is the partial compactification of $\cA_5$ parametrizing ppav $(A,\T)$ of dimension $5$ and their rank $1$ degenerations. This space is a quasi-projective variety and is essentially the blow-up of the open set $\cA_5\amalg\cA_4$ in the Satake-Baily-Borel compactification $\cA_5^*$ along its boundary $\cA_4$ (\cite{Igusa-DesingSieg-1967}). The coarse moduli space of $\tilde\cA_5$ is the union of $\cA_5$ and a divisor $\Delta$ parametrizing rank $1$ degenerations. Mumford \cite{MumfordKodAg-1983} computed the class of the closure of $\theta_{null}$ and $N_0'$ in $\tilde{A}_5$ to be

\begin{eqnarray}\label{eqnANdivisor1}
[\theta_{null}]=264\lambda-32\delta,
\end{eqnarray}
\begin{eqnarray}\label{eqnANdivisor2}
[N'_0]=108\lambda-14\delta,
\end{eqnarray}
\begin{eqnarray}\label{eqnANdivisor3}
[ N_0]=[\theta_{null}]+2[N_0']=480\lambda-60\delta,
\end{eqnarray}
where $\lambda$ is the first Chern class of the Hodge bundle $\Lambda$ and $\delta$ is the class of $\Delta$.
\begin{lemma}\label{lemlambda}
The degree of $\rho^* \lambda$ is $6$.
\end{lemma}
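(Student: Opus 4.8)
The plan is to identify $\rho^*\Lambda$ with the Prym part of the Hodge bundle of the family of double covers and to compute its degree as a difference of two Hodge bundle degrees that are visible on the surfaces $\widetilde S$ and $\widetilde T$. Since $\widetilde f:\widetilde S\ra\widetilde T$ is the base change of the \'etale double cover $f:S\ra T$ along the blow-down $\widetilde T\ra T$ (the twenty points blown up on $S$ being the $f$-preimages of the ten base points blown up on $T$), it is itself \'etale of degree $2$ and
\[
\widetilde f_*\cO_{\widetilde S}\cong \cO_{\widetilde T}\oplus\eta,
\]
where $\eta$ is the pullback to $\widetilde T$ of the $2$-torsion class $K_T$. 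As $\widetilde f$ is \'etale we have $\omega_{\widetilde S/l}\cong\widetilde f^*\omega_{\widetilde T/l}$, so the projection formula gives a splitting
\[
\pi'_*\omega_{\widetilde S/l}\cong\pi_*\omega_{\widetilde T/l}\ \oplus\ \pi_*\bigl(\omega_{\widetilde T/l}\otimes\eta\bigr)
\]
into the invariant and anti-invariant parts for the covering involution. Over the open locus where the fibers are smooth, the second summand restricts on a fiber to the Prym-canonical system $H^0(C_t,\omega_{C_t}\otimes\eta|_{C_t})=H^0(D_t,\omega_{D_t})^-=H^{1,0}\bigl(\Prym(D_t,C_t)\bigr)$, i.e.\ it is the pullback $\rho^*\Lambda$ of the Hodge bundle. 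Hence, up to the boundary comparison discussed below,
\[
\deg\rho^*\lambda=\deg\pi'_*\omega_{\widetilde S/l}-\deg\pi_*\omega_{\widetilde T/l}.
\]

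To evaluate the two terms I would use the formula, valid for any flat family $g:X\ra\bP^1$ of connected nodal curves of arithmetic genus $h$ with $X$ a smooth surface,
\[
\deg\bigl(g_*\omega_{X/\bP^1}\bigr)=\chi(\cO_X)+h-1,
\]
which follows from the Leray spectral sequence for $g$, relative duality $R^1g_*\cO_X\cong(g_*\omega_{X/\bP^1})^\vee$, and Riemann--Roch on $\bP^1$. Because $\chi(\cO_X)$ is a birational invariant of a smooth surface, the blow-ups do not change it: $\chi(\cO_{\widetilde T})=\chi(\cO_T)=1$ since $T$ is an Enriques surface, and $\chi(\cO_{\widetilde S})=\chi(\cO_S)=2$ since $S$ is a K3 surface. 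The fibers of $\pi$ have arithmetic genus $6$ and those of $\pi'$ have arithmetic genus $2\cdot 6-1=11$, so
\[
\deg\pi_*\omega_{\widetilde T/l}=1+6-1=6,\qquad \deg\pi'_*\omega_{\widetilde S/l}=2+11-1=12,
\]
whence $\deg\rho^*\lambda=12-6=6$.

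The step I expect to be the main obstacle is the boundary comparison: I must check that $\pi_*(\omega_{\widetilde T/l}\otimes\eta)$ is not merely isomorphic to $\rho^*\Lambda$ over the smooth locus but equals it as a subbundle of $\pi'_*\omega_{\widetilde S/l}$ across the forty-two special points $s_i$, so that the degrees agree. At each $s_i$ the fiber $D_{s_i}$ is the irreducible two-nodal curve of Corollary \ref{corcover}, the Prym degenerates to the rank one semiabelian variety $G_{pq}$ of \S\ref{subseccompPrym}, and $\rho(s_i)$ lies on the boundary divisor $\Delta$. What is needed is that the pushforward of the relative dualizing sheaf computes the canonical (Deligne) extension of the underlying variation of Hodge structure, and that this is compatible with the anti-invariant decomposition and with the Hodge bundle $\Lambda$ on $\widetilde\cA_5$; this is precisely the behaviour of the dualizing sheaf under the rank one degenerations constructed in \cite{MumfordKodAg-1983}, on which our construction in \S\ref{subseccompPrym} is modelled. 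As an internal consistency check one can recompute $\chi_{top}(\widetilde S)=\chi_{top}(S)+20=44$ directly and also as $\chi_{top}(\bP^1)\,\chi_{top}(D_t)+2\cdot 42=2(-20)+84$, using that each of the $42$ fibers $D_{s_i}$ acquires two nodes.
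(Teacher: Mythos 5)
Your proposal is correct, and it reaches the key identity $\deg\rho^*\lambda=\deg\pi'_*\omega_{\widetilde{S}/l}-\deg\pi_*\omega_{\widetilde{T}/l}$ by essentially the same mechanism as the paper: the paper asserts the exact sequence $0\ra\pi_*\omega_{\widetilde{T}/l}\ra\pi'_*\omega_{\widetilde{S}/l}\ra\rho^*\Lambda\ra 0$, while you derive the finer statement that this sequence splits, via $\widetilde{f}_*\cO_{\widetilde{S}}\cong\cO_{\widetilde{T}}\oplus\eta$ and the projection formula. Where you genuinely diverge is in evaluating the two degrees. The paper applies Mumford's relation $12\lambda=\kappa+\delta$ on $\overline{M}_6$, which requires computing the self-intersections $(\omega_{\widetilde{T}/l})^2=30$, $(\omega_{\widetilde{S}/l})^2=60$ and the node counts $42$ and $84$; your route uses only the Leray spectral sequence, relative duality $R^1g_*\cO_X\cong(g_*\omega_{X/\bP^1})^\vee$, and the birational invariance of $\chi(\cO)$, so the answer $12-6=6$ falls out of $\chi(\cO_S)=2$, $\chi(\cO_T)=1$ and the genera $11$ and $6$ alone — notably, your computation never needs Proposition \ref{propsingular} or the structure of the singular fibers, which makes it both more elementary and more robust. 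The boundary issue you flag (that the anti-invariant summand $\pi_*(\omega_{\widetilde{T}/l}\otimes\eta)$ is the pullback of $\Lambda$ from $\tilde{\cA}_5$ across the $42$ points $s_i$, not just over the smooth locus) is a real point, but it is one the paper's proof also relies on silently in writing down its exact sequence over all of $l$; your sketch via the canonical-extension property of $\pi_*\omega$ for nodal families is the standard and correct justification. What the paper's approach buys in exchange is consistency with the rest of the section, where degrees against divisor classes on $\tilde{\cA}_5$ (Corollary \ref{corN0}) are computed in the same intersection-theoretic style.
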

\begin{proof}The pull-back of the Hodge bundle $\Lambda$ to $l$ fits in the exact sequence
\[
0\lra\pi_*\omega_{\widetilde{T}/l}\lra\pi'_*\omega_{\widetilde{S}/l}\lra \rho^* \Lambda \lra0,
\] 
where $\omega_{\widetilde{T}/l}$ and $\omega_{\widetilde{S}/l}$ are the relative dualizing sheaves.
Thus $c_1(\rho^*\Lambda)=c_1(\pi'_*\omega_{\widetilde{S}/l})-c_1(\pi_*\omega_{\widetilde{T}/l})$.
 We directly compute that the relative dualizing sheaf $\omega_{\widetilde{T}/l}=K_{\widetilde{T}}\otimes \pi^*K_l^{-1}$ has self intersection number
 $(\omega_{\widetilde{T}/l})^2=30$. Since, by Proposition \ref{propsingular}, the number of singular curves in the pencil is $42$, applying Mumford's relation \cite[Chapter 13.7]{ACG} on $\overline{M}_6$, we see that $c_1(\pi_*\omega_{\widetilde{T}/l})=\frac{30+42}{12}=6$. Similarly, we compute $c_1(\pi'_*\omega_{\widetilde{S}/l})=12$ and therefore $c_1 (\rho^*\Lambda)=6$.
\end{proof}

\begin{corollary}\label{corN0}
In the pencil $l$, counting with multiplicities, there are $240$ fibers with theta divisor singular at a unique two-torsion point and $60$ fibers with theta divisor singular at two points.
\end{corollary}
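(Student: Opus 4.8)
The plan is to read both counts as intersection numbers of the curve $\rho(l)$ with the two Andreotti--Mayer components and then to evaluate these using the class formulas $[\theta_{null}]=264\lambda-32\delta$ and $[N_0']=108\lambda-14\delta$ together with the two numerical inputs $\deg\rho^*\lambda$ and $\deg\rho^*\delta$. First I would match the two geometric conditions to the two divisor components. As recalled in the Introduction, the theta divisor of a general point of $\theta_{null}$ has a single node at a two-torsion point, while that of a general point of $N_0'$ has a pair of nodes $\pm x$. Hence the set of $t\in l$ whose compactified Prym has theta divisor singular at a unique two-torsion point is $\rho^{-1}(\theta_{null})$, and the set whose theta divisor is singular at two points is $\rho^{-1}(N_0')$; counted with the scheme-theoretic multiplicities of the divisors $\rho^*\theta_{null}$ and $\rho^*N_0'$ these are precisely $\deg\rho^*[\theta_{null}]$ and $\deg\rho^*[N_0']$. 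For this identification to account for everything one needs $\rho(l)$ to meet the two components properly and to avoid both the deeper strata where the theta singularities degenerate further and the stratum $\theta_{null}\cap N_0'$; this is where the genericity of the Lefschetz pencil $l$ inside $|H|$ enters, and it is exactly the configuration for which Mumford computed the two classes.

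Next I would assemble the numbers $\deg\rho^*\lambda$ and $\deg\rho^*\delta$. The value $\deg\rho^*\lambda=6$ is the preceding Lemma. For $\delta$ I would first observe that $\rho^{-1}(\Delta)$ is supported on the $42$ points $s_1,\dots,s_{42}$ of Proposition \ref{propsingular}: a smooth fiber yields an honest abelian fivefold, hence a point of $\cA_5$, while by Corollary \ref{corcover} each singular fiber $C_{s_i}$ is an irreducible one-nodal curve carrying an irreducible \'etale double cover, so its compactified Prym is a $\bC^*$-extension of a four-dimensional ppav, i.e.\ a rank-one degeneration lying on $\Delta$. The step I expect to be the genuine obstacle is to show that each of these $42$ intersections is transverse, so that $\deg\rho^*\delta=42$ and not some larger weighted count. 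This is a local statement at each $s_i$: because $l$ is a Lefschetz pencil, $C_{s_i}$ acquires a single node and the coordinate $t$ on $l$ with $t=0$ at $s_i$ is a versal smoothing parameter for that node. By Picard--Lefschetz the one-parameter degeneration of Pryms then has unipotent monodromy with a single nontrivial Jordan block, and the $\bC^*$-extension parameter governing the position relative to the boundary has period $\tau\sim\frac{1}{2\pi i}\log t$, so the transverse coordinate $q=e^{2\pi i\tau}\sim t$ vanishes to order exactly one. In the partial compactification $\widetilde{\cA}_5$, the blow-up of $\cA_5\amalg\cA_4$ along $\cA_4$, this is precisely the assertion that $\rho$ meets $\Delta$ transversally at $s_i$, giving $\deg\rho^*\delta=42$.

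Finally, with $\deg\rho^*\lambda=6$ and $\deg\rho^*\delta=42$ in hand, I would substitute into the class formulas:
\[
\deg\rho^*[\theta_{null}]=264\cdot 6-32\cdot 42=240,\qquad \deg\rho^*[N_0']=108\cdot 6-14\cdot 42=60,
\]
which are the two asserted counts. Thus the only point requiring real care is the transversality of $\rho$ along $\Delta$; everything else is bookkeeping between the geometric description of the singularities and the divisor classes.
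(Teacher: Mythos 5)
Your proposal is correct and follows essentially the same route as the paper: the paper's proof is exactly the computation $l\cdot[\theta_{null}]=264\cdot 6-32\cdot 42=240$ and $l\cdot[N_0']=108\cdot 6-14\cdot 42=60$, using $\deg\rho^*\lambda=6$ from the preceding lemma and $\deg\rho^*\delta=42$ from the $42$ singular fibers of the pencil. The only difference is that you spell out the transversality of $\rho$ along $\Delta$ (via the $\log t$ behavior of the degenerating period), which the paper leaves implicit in taking $l\cdot\delta=42$.
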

\begin{proof} Since $l \cdot \delta = 42$ by Proposition \ref{propsingular} and $l \cdot \lambda = 6$ by Lemma \ref{lemlambda}, we directly compute $l\cdot[\theta_{null}]=l\cdot(264\lambda-32\delta)=240$ and $l\cdot[N_0']=l\cdot(108\lambda-14\delta)=60$.
\end{proof}

To prove the smoothness of the total spaces $A$ and $\T$, we first need two lemmas.\\
Let $\theta_{null}'$ be the divisor of $\cR_6$ parametrizing double covers of curves of genus $6$ with a vanishing theta-characeristic.

\begin{lemma}\label{lemgenthetanull}
The image of $\mu : \cH \ra \cR_6$ (see \eqref{eqmu}) contains a non-empty open subset of $\theta_{null}'$. 
\end{lemma}

\begin{proof}
The inverse image $\mu^{-1}\theta_{null}'$ is a divisor in $\cH$ which is non-empty by Corollary \ref{corN0}. Hence, if the image of $\mu$ does not contain a non-empty open subset of $\theta_{null}'$, the image $\mu(\mu^{-1}\theta_{null}')$ is contained in a divisor in $\theta_{null}'$ and $\mu$ is not quasi-finite anywhere on $\mu^{-1}\theta_{null}'$. We now show that there is a locus in $\mu^{-1}\theta_{null}'$ where $\mu$ is quasi-finite, thus obtaining a contradiction.

Let $\ocR_6$ be the Deligne-Mumford compactification of $\cR_6$, as constructed in \cite[p. 179]{Beauville1977-1} (where it is denoted $\ocJ$). By construction, the natural projection $\cR_6 \ra \cM_6$ extends to a proper morphism $q : \ocR_6 \ra \ocM_6$. The extended rational map $\omu : \cH \ra \cR_6 \inj \ocR_6$ is well-defined on all nodal curves in $\cH$. It follows from \cite[Proposition 3.2, p. 349]{MoriMukai83} that, for any Enriques $T$ and any stable curve $C= E \cup F\in |H|$ where $E$ is an elliptic curve and $F$ is a smooth curve of genus $3$ meeting $E$ at $3$ distinct points, the map $\omu$ is quasi-finite at $E\cup F\in \cH$. (Note that the inverse image of $E\cup F$ in the $K3$ cover $S$ is the curve considered in \cite[Proposition 3.2, p. 349]{MoriMukai83}.) 

Therefore we will be done if we show that the closure of $\theta_{null}'$ contains such stable curves $E\cup F$. In fact all such curves belong to the closure of $\theta_{null}'$. For this we show that such curves have limit theta-characteristics (see \cite{Cornalba89}) with two independent global sections. This is sufficient because
\begin{itemize}
\item all the components of the set of stable curves with limit theta-characteristics with $\geq 2$ global sections have codimension $1$, 
\item the locus of curves $E\cup F$ is contained in exactly one boundary component of $\ocM_6$: the component $\Delta_0$ whose generic points parametrize irreducible curves, 
\item curves parametrized by generic points of $\Delta_0$ do not carry vanishing theta-characersitics.
\end{itemize}

Given a curve $E\cup F$ as above, suppose the points $p_1, p_2, p_3$ of $E$ are identified with the points $q_1, q_2, q_3$ of $F$ respectively. Consider the quasi-stable curve $E\cup R_1 \cup R_2 \cup R_3 \cup F$ where the smooth rational curve $R_i$ joins $p_i$ to $q_i$ and no other intersections exist between the components. Choose an odd theta-characteristic $\eta$ on $F$. Then a simple computation shows that the sheaf whose restriction to $E$ is $\cO_E$, whose restriction to $R_i$ is $\cO_{R_i}(1)$, and whose restriction to $F$ is $\eta$, is a vanishing theta-characteristic on $E\cup R_1 \cup R_2 \cup R_3 \cup F$. By Cornalba's description of the compactification of the moduli space of curves with theta-characteristics in \cite{Cornalba89}, this implies that $E\cup F$ belongs to the closure of $\theta_{null}'$.
\end{proof}

\begin{lemma}\label{lemmult1}
For $l$ and $T$ generic, the image of $l$ in $\cA_5$ meets $N_0$ transversely everywhere.
\end{lemma}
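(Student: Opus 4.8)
The plan is to derive the transversality from the Mori--Mukai dominance of $\cA_5$ together with generic smoothness in characteristic zero, treating the non-generic incidences by a dimension count. I would first reduce to transversality against the smooth locus of the reduced divisor $\theta_{null}\cup N_0'$. Let $N_0^{\circ}\subset N_0$ be the locus of $(A,\T)$ whose theta divisor has a single node at a two-torsion point (the general point of $\theta_{null}$) or exactly two nodes at a pair of opposite, non-two-torsion points (the general point of $N_0'$), and which in addition is a smooth point of $\theta_{null}\cup N_0'$ lying off the deeper Andreotti--Mayer locus $N_1$ and off $\theta_{null}\cap N_0'$. The excluded loci are all of codimension at least two in $\cA_5$, so it will be enough to show that for generic $(T,l)$ the curve $\rho(l)$ meets $N_0$ only inside $N_0^{\circ}$ and is transverse there.

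Next I would globalize over the Mori--Mukai family. Let $S$ be the irreducible parameter space of pairs $(T,l)$, with $T$ an Enriques surface satisfying (\ref{general}) and $l\subset|H|\cong\bP^5$ a pencil; let $\pi_S:\cL\to S$ be the universal pencil, a $\bP^1$-bundle, and $F:\cL\ra\cA_5$ the dominant rational evaluation map sending $(T,l,t)$ to $\rho(t)$. Because every $u\in|H|$ lies on a pencil, the image of $F$ is the union over $T$ of the images of $\cP_H$, which is dense in $\cA_5$ by \cite{MoriMukai83}; hence $F$ is dominant. Here $\dim S=10+\dim\mathrm{Gr}(2,6)=18$ and $\dim\cL=19>15=\dim\cA_5$, so in characteristic zero generic smoothness supplies a dense open $V\subset\cA_5$ over which $F$ is a smooth submersion with four-dimensional fibres.

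Granting for the moment that $N_0\cap V$ is dense in $N_0$, the argument concludes formally. Set $\tilde N:=F^{-1}(N_0^{\circ}\cap V)$, which is then smooth of codimension one in $\cL$ and of dimension $18$. Since $F$ is dominant, the preimage of the codimension $\geq 2$ locus $N_0\setminus(N_0^{\circ}\cap V)$ does not dominate $S$, so for generic $s$ the fibre $\cL_s=\rho(l)$ avoids it. The restriction $\pi_S|_{\tilde N}:\tilde N\to S$ is a dominant morphism of $18$-dimensional varieties with generically finite fibres, so by generic smoothness the general $s\in S$ is a regular value; for such $s$ the fibre $\cL_s$ meets $\tilde N$ transversally in $\cL$, and because $F$ is a submersion along $\tilde N$ this is equivalent to $\rho=F|_{\cL_s}$ being transverse to $N_0$ in $\cA_5$. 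Thus for generic $T$ and generic (hence Lefschetz) pencil $l$, the image $\rho(l)$ meets $N_0$ only along $N_0^{\circ}\cap V$ and transversally; the $240$ and $60$ resulting transverse intersection points are then precisely those counted in Corollary \ref{corN0}.

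The main obstacle is exactly the density assumption used above: that $F$ is a submersion over a general point of $N_0$, equivalently that the Mori--Mukai family is infinitesimally transverse to $N_0$ rather than having $N_0$ sit inside its critical locus. Dominance alone only gives submersivity over a dense open of $\cA_5$, and a priori one of the fourteen-dimensional components of $N_0$ could be swallowed by the codimension one critical locus; that is the one place where the correct intersection number $240+60$ would fail to force transversality. I would attack this infinitesimally: at a node $x$ of $\T$ the tangent space $T_{(A,\T)}N_0\subset\operatorname{Sym}^2 T_0A$ is the hyperplane annihilated, via the heat equation, by the Hessian quadric of $\theta$ at $x$, so it suffices to exhibit a single first-order deformation of $C_u$ in $|H|$ whose induced deformation of the Prym pairs nontrivially against this Hessian. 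Equivalently, one can phrase the needed statement as the unramifiedness of the Prym map $\cP:\cR_6\to\cA_5$ along a general point of $N_0$ together with the generic finiteness of the Enriques construction $\{(T,u)\}\to\cR_6$ there. Establishing this infinitesimal nondegeneracy is the crux; once it is in hand the transversality is formal.
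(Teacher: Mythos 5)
Your formal Sard-type framework (globalize over pairs $(T,l)$, use generic smoothness on the incidence variety, pull transversality back through a submersion) is sound as far as it goes, but you have correctly identified and then left unproven the entire mathematical content of the lemma. The statement you defer --- that the evaluation map $F$ is submersive at a general point of each component of $N_0$, i.e.\ that neither $\theta_{null}$ nor $N_0'$ is swallowed by the critical locus of the Mori--Mukai family --- is not a density hypothesis one can grant and postpone; it \emph{is} the lemma. Dominance gives nothing along a divisor, as you yourself note, and the intersection numbers $240$ and $60$ of Corollary \ref{corN0} are computed with multiplicities, so they cannot force transversality. A proposal that reduces the lemma to ``the crux'' of equal difficulty and stops has a genuine gap.

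For comparison, that crux is exactly what the paper's proof establishes, by an argument that occupies all of it: at a point $t$ with image in $N_0$, Mumford's criterion describes the node of $\T_t$ by a sheaf $M$ of canonical norm on $D_t$; the case $h^0(M)\geq 4$ is excluded because the ramification divisor $R$ of the Prym map (Farkas--Grushevsky--Salvati Manni--Verra) has degree $0$ on the image of $l$ in $\cR_6$; in the remaining cases $M=f_t^*N(B)$, the tangent hyperplane to $N_0$ is cut (via the heat equation) by the quadric tangent cone $q_t$, whose divisor of zeros on the Prym-canonical curve is computed to be $R_N+\oB$. The pairing $q_t\ {}_\cup\ \epsilon$ with the Kodaira--Spencer class of the family $|\cO_T(C_t)|$ is then identified, through the Lazarsfeld--Mukai bundle of $N$, with the extension class of sequence \eqref{eqG}, and tangency would force that extension to split; splitness is finally converted, using hypothesis \eqref{general} on $NS(T)$, into an effective decomposition $C_t\equiv 2X+Y$ with $Y^2=0$, contradicting $10=C_t^2\equiv 0\pmod 4$. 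Nothing in your proposal substitutes for this chain. A secondary, smaller gap: to have a generic pencil avoid the codimension-two loci (e.g.\ $\theta_{null}\cap N_0'$, deeper Andreotti--Mayer strata), you need $\dim F^{-1}(Z)\leq 17$ for such $Z$, which requires bounding the fiber dimension of $F$ over $Z$; upper semicontinuity does not give this for free. The paper avoids the issue entirely by proving non-tangency at \emph{every} point of $l\cap\cP_H^{-1}(N_0)$, not merely at general points of the two components.
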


\begin{proof}
We need to prove that the image of $l$ in $\cA_5$ is not tangent to $N_0 = N'_0 \cup \theta_{null}$. Consider the image of $l$ in $\cR_6$. By \cite{SmithVarley1985-A5}, the branch divisor of the Prym map $P :\cR_6 \ra \cA_5$ is $N'_0$. The inverse image of $N'_0$ in $\cR_6$ is the union of the ramification divisor $R$ and the anti-ramification divisor $R'$. So $P^{-1} N_0 = R \cup R' \cup P^{-1} \theta_{null}$.

\begin{enumerate}

\item\label{lcapR} {\bf $l\cap R=\emptyset$.}\\
We first show that the image of $l$ in $\cR_6$ does not intersect $R$ and describe the singular points of the theta divisors at the points of $l\cap R'$ and $l\cap P^{-1} \theta_{null}$.\\
Let $t$ be a point of $l$ whose image lies in $N_0$ and let $P_t$ be the Prym variety of the cover $f_t := f |_{D_t} : D_t \ra C_t$. By \cite[pp. 342-343]{Mumford-PrymI-1974}, a singular point of the theta divisor $\T_t$ of $P_t$ corresponds to an invertible sheaf $M$ of canonical norm on $D_t$ such that, either $h^0 (M) \geq 4$, or $h^0(M) =2$ and there exists an invertible sheaf $N$ on $C_t$ with $h^0 (M\otimes f_t^*N^{-1}) >0$.\\
By \cite[Theorem 6.5]{FarkasGrushevskySalvatiManniVerra} we have $h^0 (M) \geq 4$ if and only if the double cover $D_t \ra C_t$ belongs to $R$.\\
Using the formula in \cite[Corollary 7.3]{FarkasGrushevskySalvatiManniVerra} for the divisor class of $R$ we compute that the degree of $R$ on the image of $l$ is $0$. Since $l$ is generic, it does not lie in $R$ hence it does not intersect $R$.\\
It also follows from the above argument that the Prym map is everywhere of maximal rank on the image of $l$ in $\cR_6$. In particular, by \cite{DonagiSmith81}, the curve $C_t$ is not hyperelliptic or trigonal.\\
We therefore have $M = f_t^* N (B)$ for an effective divisor $B$ on $D_t$ and a line bundle $N$ on $C_t$ of degree $4$ or $5$ such that $h^0 (N) =2$. By \cite[Proposition 7.1]{FarkasGrushevskySalvatiManniVerra}, when $N$ has degree $4$, the cover $D_t \ra C_t$ belongs to the antiramification divisor $R'$. When $N$ has degree $5$, $M=f_t^* N$ is a singular point of order $2$ on $\T_t$, hence the cover $D_t \ra C_t$ belongs to the inverse image of $\theta_{null}$ in $\cR_6$.

\item\label{qt} {\bf The equation $q_t$ of the tangent space to $N_0$.}\\
Let $\alpha$ be the point of order $2$ associated to the double cover $D_t \ra C_t$. In other words, $\alpha$ is the restriction of the canonical sheaf $K_T$ of $T$ to $C_t$. Since $h^0 (M) =2$, we have $h^0 (N \otimes \alpha) =0$. Furthermore, since $M$ has canonical norm, we have $N^2 (\oB) \cong \omega_{C_t}$ where $\oB := {f_t}_* B$.

Let $q_t \in S^2 H^1(\cO_{P_t})^*$ be an equation for the quadric tangent cone to the theta divisor $\T_t$ of $P_t$ at the singular point $M$ or $K_{D_t}\otimes M^{-1}$. Then, using the heat equation, it is easily seen, see, e.g., \cite[p. 87]{Mumford1975}, that under the identification $T_t \cA_5 \cong S^2 H^1(\cO_{P_t})$, $q_t$ is also an equation for the tangent space to $N_0$ at $t$.

Identifying the cotangent space to $P_t$ with the space $H^0 (K_{C_t} \otimes \alpha)$, the codifferential of the Prym map is identified with the multiplication map
\[
T_{P_t}^* \cA_5 \cong S^2 H^0 (K_{C_t}\otimes \alpha) \lra H^0 (K^2_{C_t}) \cong T_{C_t}^* \cM_6 
\]
(see \cite[p. 178]{Beauville1977-1}) where we identify the cotangent space to the moduli stack $\cR_6$ with that of the moduli stack $\cM_6$ via the natural projection. Since this map is an isomorphism, the quadric $q_t$ is determined by its zeros on the Prym-canonical image of $C_t$ in $\bP H^0 (K_{C_t} \otimes \alpha)^* = \bP H^1 (\cO_{P_t})$.

\item {\bf The divisor of $q_t$ on $C_t$.}\\
First note that $M$ is a double point of rank $3$ on the theta divisor $\T_{D_t}$ of $JD_t$: the rulings of the tangent cone $\gamma_t$ to $\T_{D_t}$ cut the moving parts of $M$ and $K_{D_t}\otimes M^{-1}$ on $D_t$. By part \eqref{lcapR} above, $M \cong f_t^* N (B)$ and $f_t^* N^2 (B+\sigma B) \cong f_t^* K_{C_t} \cong K_{D_t}$, so the moving parts of $M$ and $K_{D_t}\otimes M^{-1}$ and hence the two rulings of $\gamma_t$ are equal. Therefore, by, e.g., \cite[p. 11]{SmithVarley2006-Pfaffian}, locally around $M$, an equation $s$ of the theta divisor of $JD_t$ has an expansion of the form
\[
s = x^2 - yz + Q^2 + \hbox{higher order terms}
\]
where $x,y,z$ are suitable analytic coordinates centered at $M$ and $Q$ is homogeneous of degree $2$. By \cite[p. 343]{Mumford-PrymI-1974}, since $h^0(M)=h^0(N)=2$, the second degree term $x^2 - yz$ vanishes on the tangent space to $P_t$. Therefore $q_t$ is the restriction of $Q$ to $T_0 P_t = H^1 (\cO_{P_t})$. By \cite[p. 349, end of $\S 1$]{KempfSchreyer1988}, the trace of $Q^2$ on the canonical image of $D_t$ is the divisor
\[
2R_{f_t^* N} + 2B + 2\tau B
\]
where $R_{f_t^* N} = f_t^* R_N$ is the ramification divisor of the pencil $|f_t^* N|$. Therefore the trace of $Q$ on $D_t$ is $f_t^* R_{N} + B + \tau B$. For a point $p\in C_t$ with inverse images $p'$ and $p''$ in $D_t$, the Prym-canonical image of $p$ is the intersection of the span $\langle p' + p'' \rangle \subset \bP H^0 (K_{D_t} )^*$ with the $\tau$-anti-invariant subspace $\bP H^0 (K_{C_t} \otimes \alpha)^*$. If $u$ and $v$ are homogeneous coordinates on $\langle p' + p'' \rangle$ with respective zeros $p'$ and $p''$, then $\tau^* u =v$, and $u-v$, $u+v$ are, respectively, the $\tau$-anti-invariant and $\tau$-invariant coordinates on $\langle p' + p'' \rangle$. Assume that either $p$ is a ramification point of $|N|$ or, if $B$ is nonzero, a point of the support of $\oB := {f_t}_* B$. Since $Q$ contains $p'$ and $p''$, it restricts to a multiple of $uv = \frac{1}{4} ((u+v)^2 - (u-v)^2)$ on $\langle p' + p'' \rangle$. The restriction of $Q$ to $\bP H^0 (K_{C_t} \otimes \alpha)^*$ is obtained by setting its $\tau$-invariant coordinates to $0$. Therefore the restriction of $Q$ to $\bP H^0 (K_{C_t} \otimes \alpha)^*$ vanishes on the Prym-canonical image of $p$ whose equation on $\langle p' + p'' \rangle$ is $u-v$.

Therefore the divisor of zeros of $q_t$ on $C_t$ is $\frac{1}{2}{f_t}_* (f_t^* R_{N} + B + \tau B) = R_N + \oB \in |K_{C_t}^2|$.

\item {\bf Transversality in terms of the cup product of $q_t$ with the extension class of the tangent bundle sequence.}\\
Consider the tangent bundle sequence
\[
0 \lra \cT_{C_t} \lra \cT_T |_{C_t} \lra \cO_{C_t} (C_t) \lra 0.
\]
The connecting homomorphism
\[
H^0 (\cO_{C_t} (C_t)) \lra H^1 (\cT_{C_t}) = H^0 (K_{C_t}^2)^*
\]
is the Kodaira-Spencer map of the family of curves parametrized by $|\cO_T (C_t)|$. It is given by cup-product with the extension class $\epsilon \in H^1 (\cT_{C_t} (-C_t))$ of the tangent bundle sequence. To show that the image of a generic line $l$ is not tangent to $N_0$, we need to show that the hyperplane defined by $q_t$ in $H^1 (\cT_{C_t})$ does not contain the image of $H^0 (\cO_{C_t} (C_t))$. In other words, we need to show that $q_t \ {}_\cup \ \epsilon \in H^0 (\cO_{C_t}(C_t))^* = H^1 (\alpha)$ is not zero.

\item {\bf Transversality via Lazarsfeld-Mukai extensions.}\\
Let $b$ and $r_N$ be sections of $\cO_{C_t} (B)$ and $K_{C_t}\otimes N^2$ with divisors of zeros $\oB$ and $R_N$ respectively, so that $q_t = b \ {}_\cup \ r_N$. The Lazarsfeld-Mukai bundle $F$ on $T$ defined by the natural exact sequence
\begin{equation}\label{eqF}
0 \lra F^* \lra H^0 (N) \otimes \cO_T \lra N \lra 0.
\end{equation}
Note that $N$ is base point free. When $N$ has degree $4$, this is a consequence of the fact from part \eqref{lcapR} that $C_t$ is not trigonal. When $N$ has degree $5$, $N$ is a vanishing theta-null on $C_t$. The double cover $D_t \ra C_t$ is, by Lemma \ref{lemgenthetanull}, parametrized by a generic point of $\theta_{null}'$, hence $N$ is base point free also in this case. \\
Put $E := F |_{C_t}$. As in \cite[p. 252]{Voisin1992-Wahl} we have the commutative diagram with exact rows
\[
\xymatrix{
0\ar[r]&K_{C_t}^{-1}\otimes N\otimes\alpha\ar[r]\ar@{=}[d]&E^* \ar[r]\ar[d]&N^{-1}\ar[r]\ar[d]&0\\
0\ar[r]&K_{C_t}^{-1}\otimes N\otimes\alpha\ar[r]&\Omega^1_T |_{C_t}\otimes N\ar[r]&K_{C_t} \otimes N\ar[r]&0}
\]
which shows that $r_N \ {}_\cup \ \epsilon$ is the extension class for the extension
\begin{equation}\label{eqE}
0 \lra K_{C_t}^{-1} \otimes N \otimes \alpha \lra E^* \lra N^{-1} \lra 0.
\end{equation}
Using the fact that $M$ has canonical norm, we obtain $N^2 (\oB) \cong K_{C_t}$ so that $K_{C_t}^{-1} \otimes N \otimes \alpha \cong N^{-1} (-\oB) \otimes \alpha$. Pushing forward sequence \eqref{eqE} via multiplication by $b$ : $N^{-1} (-\oB)\otimes \alpha \ra N^{-1} \otimes \alpha$, we obtain that $b \ {}_\cup \ r_N \ {}_\cup \ \epsilon$ is the extension class for the  pushforward extension
\begin{equation}\label{eqG}
0 \lra N^{-1} \otimes \alpha \lra G^* \lra N^{-1} \lra 0.
\end{equation}
So, to complete the proof of the lemma, we need to prove that this extension is not split. 

\item {\bf The Lazarsfeld-Mukai extension does not split.}\\
Dualizing sequence \eqref{eqF}, we obtain the exact sequence
\begin{equation}\label{eqF*}
0 \lra H^0 (N)^* \otimes \cO_T \lra F \lra N^{-1} \otimes \cO_{C_t} (C_t) \lra 0.
\end{equation}
or
\begin{equation}\label{eqF**}
0 \lra H^0 (N)^* \otimes \cO_T \lra F \lra N (\oB) \otimes \alpha \lra 0.
\end{equation}
Twisting \eqref{eqF**} by $F^*$ we obtain
\begin{equation}\label{eqFF*}
0 \lra H^0 (N)^* \otimes F^* \lra F \otimes F^* \lra N (\oB) \otimes \alpha \otimes F^* \lra 0.
\end{equation}
From the cohomology of sequence \eqref{eqF} we obtain $h^0 (F^*) = h^1(F^*) =0$. Therefore, the cohomology of sequence \eqref{eqFF*} gives the isomorphism $H^0 (F \otimes F^*) = H^0 (N (\oB) \otimes \alpha \otimes F^*)$. Twisting sequence \eqref{eqE} with $N (\oB) \otimes \alpha$ and taking cohomology we obtain $h^0 (N (\oB) \otimes \alpha \otimes E^*) = 1$ because $h^0 (\alpha (\oB)) =0$ since $h^0 (B + \tau B) =1$. Now $N (\oB) \otimes \alpha \otimes E^*\cong N (\oB) \otimes \alpha \otimes F^*$ and we deduce $h^0 (F \otimes F^*) =1$.

Twisting \eqref{eqF} with $K_T$ and taking cohomology we obtain $h^0 (F^*\otimes K_T) = h^1(F^*\otimes K_T) =0$. Tensoring sequence \eqref{eqFF*} with $K_T$ and taking cohomology we then obtain the isomorphism $H^0 (F \otimes F^* \otimes K_T) = H^0 (N (\oB) \otimes F^*)$.

Assume now that sequence \eqref{eqG} splits. Then there exists a surjective map $G \ra N$, which implies $H^0 (G^* \otimes N) \neq 0$. Sequences \eqref{eqE} and \eqref{eqG}, after tensoring with $N(\oB)$, are part of the commutative diagram with exact rows and columns
\[
\xymatrix{& 0\ar[d] & 0\ar[d] &  \\
0\ar[r]&\alpha\ar[r]\ar[d]&E^* \otimes N (\oB)\ar[r]\ar[d]&\cO_{C_t} (\oB)\ar[r]\ar[d]^{\cong}&0\\
0\ar[r]&\alpha (\oB)\ar[d]\ar[r]&G^* \otimes N (\oB)\ar[d]\ar[r]&\cO_{C_t} (\oB)\ar[r]&0\\
&\alpha (\oB) |_{\oB}\ar[r]^{\cong}\ar[d]&\cO_{\oB}\ar[d]&&\\
 & 0 & 0. &}
\]
Since the sections of $G^* \otimes N$ can be interpreted as the sections of $G^*\otimes N(\oB)$ that vanish on $\oB$, it follows from the above diagram that we have natural inclusions
\[
H^0 (G^* \otimes N) \inj H^0 (E^* \otimes N (\oB)) \inj H^0 (G^* \otimes N (\oB)).
\]
Therefore, if $H^0 (G^* \otimes N) \neq 0$, we also have $H^0 (N (\oB) \otimes F^*) = H^0 (E^* \otimes N (\oB)) \neq 0$.

Summarizing, if sequence \eqref{eqG} splits, then $H^0 (F^* \otimes F \otimes K_T) \neq 0$. So there exists a nonzero homomorphism
\[
\varphi : F \lra F \otimes K_T.
\]
Since $h^0 (F \otimes F^*) =1$, the composition $(\varphi \otimes K_T) \circ \varphi$ is a multiple of the identity. Furthermore, $(\varphi \otimes K_T) \circ \varphi$ cannot be an isomorphism because otherwise $\varphi$ would have maximal rank everywhere hence would also be an isomorphism. \\
Note that $F$ and $F\otimes K_T$ are not isomorphic: any isomorphism would restrict to an isomorphism $H^0 (N)^* \otimes \cO_{T \setminus C_t} \ra H^0 (N)^* \otimes K_T |_{T \setminus C_t}$ which, after composing with an embedding $\cO_{T \setminus C_t} \inj H^0 (N)^* \otimes \cO_{T \setminus C_t}$ and a projection $H^0 (N)^* \otimes K_T |_{T \setminus C_t} \surj K_T |_{T \setminus C_t}$, would give an embedding $\cO_{T \setminus C_t} \subset K_T |_{T \setminus C_t}$. This would in turn imply that $K_T|_{T \setminus C_t}$ is trivial which is not possible since it would mean the cover $S\setminus D_t \ra T \setminus C_t$ is trivial. \\
Therefore $(\varphi \otimes K_T) \circ \varphi =0$. Similarly, $\varphi$ cannot have maximal rank anywhere since otherwise the same would be true of $\varphi \otimes K_T$ and of $(\varphi \otimes K_T) \circ \varphi$. Therefore the kernel of $\varphi$ is a subsheaf of rank $1$ of $F$ and the image of $\varphi$ is a subsheaf of rank $1$ of $F\otimes K_T$.

Next note that the isomorphism
\[
\begin{array}{rcl}
H^0 (F^* \otimes F \otimes K_T) & \stackrel{\cong}{\lra} & H^0 (F^* \otimes N(\oB)) = H^0 (E^* \otimes N(\oB))\\
\varphi & \longmapsto & \overline{\varphi}
\end{array}
\]
above is given by composing a homomorphism $\varphi : F \ra F \otimes K_T$ with the surjection $F \otimes K_T \surj N (\oB)$ appearing in sequence \eqref{eqF**} after twisting with $K_T$. Since the image of $\overline{\varphi}\in H^0 (E^* \otimes N (\oB))$ in $H^0 (\cO_{C_t}(\oB))$ by the map $H^0 (E^* \otimes N (\oB)) \ra H^0(N^{-1} \otimes N (\oB)) = H^0 (\cO_{C_t} (\oB))$ obtained from sequence \eqref{eqE} is nonzero, the composition
\[
N \lra E \stackrel{\overline{\varphi}}{\lra} N (\oB)
\]
is nonzero, hence injective. It follows that the image of $\overline{\varphi}$ contains the subsheaf $N$ of $N(\oB)$.

Define the torsion free sheaf $F'$ on $T$ as the kernel of the composition $F \surj E \surj \cO_{\oB}$ where the map $E \surj \cO_{\oB}$ is the composition $E \surj K_{C_t} \otimes N^{-1} \otimes \alpha \cong N (\oB) \otimes \alpha \surj (N (\oB) \otimes \alpha) |_{\oB} \cong \cO_{\oB}$. Note that by definition we have the exact sequence
\begin{equation}\label{eqFF'}
0 \lra F' \lra F \lra \cO_{\oB} \lra 0.
\end{equation}
Since $\overline{\varphi} \in H^0 (G^* \otimes N)$, a moment of reflection will show that the composition $F' \inj F \stackrel{\varphi}{\ra} F\otimes K_T$ factors through a homomorphism $\psi : F' \ra F' \otimes K_T$ whose composition with $F' \otimes K_T \ra F \otimes K_T \surj N (\oB)$ factors through $N \inj N (\oB)$. So we have the nonzero composition
\[
\overline{\psi} : F' \stackrel{\psi}{\lra} F' \otimes K_T \surj N
\]
which factors through $\overline{\varphi} :G \ra N$. Since the composition $N \ra G \stackrel{\overline{\varphi}}{\ra} N$ is induced by $N \ra E \stackrel{\overline{\varphi}}{\ra} N (\oB)$, we obtain that $\overline{\psi}$ is surjective. Therefore the image sheaf $\Img (\psi)$ is a torsion free rank $1$ sheaf on $T$ whose restriction to $C_t$ is $N$. Let $X$ be a divisor on $T$ representing $c_1 (\Img (\psi))$. Then, by, e.g., \cite[pp. 339-350]{BHPV}, $X$ is effective of non-negative self-intersection because $X \cdot C_t$ is positive and $T$ is generic. Furthermore $Y := C_t - 2X$ is also effective since its restriction to $C_t$ is $\oB$ which is effective. Since $h^0 (T, Y) \leq h^0 (C_t, Y |_{C_t}) = h^0 (\oB) =1$, we have $h^0 (Y) \leq 1$ which implies $Y$ has arithmetic genus $1$ (since $T$ is generic and does not contain curves of arithmetic genus $0$). Therefore $Y^2 =0$. We have the linear equivalence of effective divisors $C_t \equiv 2X +Y$. Hence $10 = C_t^2 = 4 X^2 + 4 X \cdot Y$ is a multiple of $4$ which is not possible.
\end{enumerate}
\end{proof}

We can now prove

\begin{proposition}\label{proptotsmooth}
The total spaces $A$ and $\T$ are smooth.
\end{proposition}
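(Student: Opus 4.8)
The plan is to verify the smoothness of $A$ and $\T$ fibrewise over $l$, separating the generic fibres, the $240+60$ fibres of Corollary~\ref{corN0} whose theta divisor is nodal, and the $42$ rank one degenerations of Corollary~\ref{corcover}; in each case the question is local around the singular locus of the relevant fibre. Over the open locus of $l$ where $C_t$ is smooth and $P_t$ has smooth theta, $A\lra l$ is an abelian scheme and $\T\lra l$ is a smooth family of smooth divisors over the smooth base $l$, so both total spaces are smooth there. It remains to treat the three kinds of special fibres.

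Consider first a fibre over $\theta_{null}$ or $N'_0$. Here $P_t$ is a smooth abelian fivefold, so $A$, being the total space of an abelian scheme over $l$ near such a point, is smooth. I would analyze $\T$ at a node by choosing local analytic coordinates $z=(z_1,\dots,z_5)$ on $A$ centred at the node and a coordinate $\tau$ on $l$ vanishing at the special point, so that $\T=\{F=0\}$ with $F(z,0)=q(z)+O(z^3)$, $q$ the nondegenerate tangent cone. Since $d_zF=0$ at the node, $\T$ is smooth there exactly when $\partial F/\partial\tau\ne0$, i.e. when the node is not preserved to first order. By the heat equation this first order variation is the value of the quadric $q_t$ on the Kodaira--Spencer direction of $\rho$, and $q_t$ is precisely an equation for the tangent space to $N_0$, as used in the proof of Lemma~\ref{lemmult1}; hence $\partial F/\partial\tau\ne0$ is equivalent to transversality of $\rho(l)$ to $N_0$, which is Lemma~\ref{lemmult1}. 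The same computation applies at each of the two nodes over $N'_0$.

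Next let $s_i$ be one of the $42$ rank one degenerations, with $A_{s_i}=P$ the compactified Prym of Section~\ref{subseccompPrym}. Since $l$ is a Lefschetz pencil, $\widetilde T$ is smooth and the node of $C_{pq}$ smooths as $xy=\tau$; the \'etale cover opens its two nodes with the same parameter, so the induced degeneration of the Prym is a Mumford rank one degeneration whose smoothing parameter has order one in $\tau$. Consequently, near $\mathrm{Sing}(P)$, the image of the glued sections $B_0\cong B_\infty$, the total space has the standard local model
\[
\{uv=\tau\}\subset\bC^2_{u,v}\times B_{\mathrm{loc}}\times\bC_\tau ,
\]
with $u=0$ and $v=0$ cutting out the two sheets of $P$; this hypersurface is smooth, and away from $\mathrm{Sing}(P)$ the family is a smooth morphism with smooth fibres. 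Hence $A$ is smooth.

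The hard part is the smoothness of $\T$ along $\Upsilon=A_{s_i}\cap\T$. Away from $\mathrm{Sing}(P)$ the divisor $\Upsilon$ equals its normalization $\Upsilon^\nu=Bl_{\Xi\cap\Xi_b}B$ of Proposition~\ref{propdegtheta}, which is smooth once $\Xi\cap\Xi_b$ is, and then $\T$ is smooth there as in the generic case. Along $\mathrm{Sing}(P)$ I would work in the model $\{uv=\tau\}$ with coordinates $(u,v,w)$, $\tau=uv$; the standard expansion of the degenerating theta function gives a leading equation
\[
\Phi(u,v,w)=\theta_0(w)+u\,\theta_1(w)+v\,\theta_{-1}(w)+O(u^2,uv,v^2),
\]
where $\theta_0=0$ is a local equation of $\Xi$ and $\theta_{\pm1}=0$ are equations of the translates of $\Xi$ produced by the degeneration; restricting to $\tau=0$ recovers Proposition~\ref{propdegtheta}, the locus $\Xi\cap\Xi_b$ appearing as $\{\theta_0=\theta_1=0\}$. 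From
\[
d\Phi|_{u=v=0}=\theta_1(w)\,du+\theta_{-1}(w)\,dv+d\theta_0(w),
\]
I read off that $\T$ is singular at a point of $\mathrm{Sing}(P)$ only if $w$ lies on $\Xi$ and on both translates and is simultaneously a singular point of $\Xi$. The crux is therefore to show that the dimension four Prym $(B,\Xi)$ of the cover $(D,C)$ has smooth theta divisor and that $\Xi$ meets its translate $\Xi_b$ transversally; both hold for generic $T$ and $l$, since the Prym map $\cR_5\lra\cA_4$ is dominant, so that the dimension four Pryms arising this way are general in $\cA_4$, and a generic translate of a smooth theta divisor meets it transversally. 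Granting this, $d\Phi\ne0$ everywhere on $\Upsilon$ and $\T$ is smooth. The main obstacle is exactly this last step: pinning down the limit equation $\Phi$ along the double locus and reducing the singularities of $\T$ there to those of the dimension four Prym theta divisor.
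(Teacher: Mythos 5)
Your handling of the fibres over $\theta_{null}$ and $N_0'$ is sound and agrees in substance with the paper, which quotes \cite[Proposition 3.1]{IzadiTamasWang} together with Lemma \ref{lemmult1} for exactly this first-order smoothing of the nodes. The divergence, and the problems, are at the $42$ rank one degenerations, where the paper argues quite differently: there $A$ is by construction the restriction to $l$ of the Arbarello--Ferretti--Sacc\`a relative Prym variety $P_H \ra |H|$, and the paper deduces smoothness from the facts that a generic $l$ avoids the loci over which $P_H$ is singular, that the differential of $P_H \ra |H|$ has non-maximal rank at the singular points of the boundary fibres, and that the tangent line to a generic $l$ therefore meets the image of that differential only in $0$; hence the differential of $A \ra l$ vanishes at such points and the tangent space to the total space equals the $6$- (resp.\ $5$-) dimensional tangent space of the fibre. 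Your proof instead postulates a local model and never connects it to the actual family: the claim that near $s_i$ the total space is the standard Mumford model $\{uv=\tau\}$, i.e.\ that the degeneration parameter vanishes to order exactly one, is asserted rather than proved, and your one-line justification (``the \'etale cover opens its two nodes with the same parameter'') would equally well ``justify'' order two, since each of the two nodes contributes a $\log\tau$ to the degenerating period. The order-one statement is true, but only because the principal polarization on the anti-invariant lattice is \emph{half} the restricted intersection form, so that the monodromy logarithm is $N(x)=(x,\delta^-)\,\delta^-$ with $\delta^-=\delta_1-\delta_2$ primitive; this computation is missing, and even with it you would still have to identify your abstract local model with the family $A$ that the paper actually constructs.

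The second, more serious, gap is the genericity claim for the four-dimensional Pryms. Your smoothness criterion for $\T$ requires $\Xi$ smooth and $\Xi$ transverse to $\Xi_b$, and you derive this from the dominance of the Prym map $\cR_5\ra\cA_4$. That is a non sequitur: the covers $(D,C)$ occurring here are not arbitrary points of $\cR_5$ but normalizations of nodal members of linear systems on Enriques surfaces, a family of dimension at most $14$ ($10$ moduli for $T$ plus $4$ for the nodal curves in $\cD$), and dominance of the abstract Prym map says nothing about whether the image of \emph{this} family in the $14$-dimensional boundary divisor $\Delta\subset\tilde{\cA}_5$ avoids the locus where $\Xi$ is singular or $\Xi\cap\Xi_b$ is non-transverse. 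Establishing that would be a separate dominance or transversality statement for the boundary, of roughly Mori--Mukai strength, which you do not supply. The paper's tangent-space argument via the relative Prym variety is designed precisely to sidestep both of these issues: it needs no period computation and no appeal to generality of $(B,\Xi,b)$ beyond the fibre descriptions already recorded in Section \ref{subseccompPrym} and Proposition \ref{propdegtheta}.
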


\begin{proof}
We show that the tangent spaces to $A$ and $\T$ have dimension $6$ and $5$ respectively everywhere. Let $p\in A_t$, resp. $p\in \T_t$, be a point of the fiber of $A \ra l$, resp. $\T \ra l$, at $t\in l$. If $A_t$ is smooth at $p$, it follows from \cite[Proposition 3.1]{IzadiTamasWang} and Lemma \ref{lemmult1} that, for a generic choice of $l$, both $A$ and $\T$ (when $p\in\T$) are smooth at $p$. Assume therefore that $A_t$ is singular at $p$. In such a case, it follows from the description of $\T_t$ in Proposition \ref{propdegtheta} that, if $p\in \T$, $\T_t$ is also singular at $p$. By the description of $A_t$ in Section \ref{subseccompPrym}, resp. $\T_t$ in Proposition \ref{propdegtheta}, the tangent space to $A_t$ at $p$, resp. $\T_t$ at $p$, has dimension $6$, resp. $5$. We therefore need to show that the tangent space to the total space $A$, resp. $\T$, is equal to the tangent space of the fiber. The tangent space to the fiber is the kernel of the differential of the map $A \ra l$, resp. $\T \ra l$. Since the map $\T \ra l$ is the scheme-theoretic restriction of the map $A\ra l$, we need to show that the differential of the map $A\ra l$ is $0$ at $p$ to obtain the smoothness of $A$ at $p$ and also of $\T$ at $p$ when $p\in\T$.

The total space $A$ is the inverse image of the generic line $l \subset |H|$ in the relative Prym variety $P_H \ra |H|$ constructed in \cite{ArbarelloFerrettiSacca}. By \cite[Prop. 3.10, Prop. 4.4, Prop. 5.1]{ArbarelloFerrettiSacca}, the singular locus of $P_H$ lies above a union of lines or points $m_i$ in $|H|$. We can therefore assume that $l$ does not meet any of the $m_i$. Furthermore, since all pull-backs are scheme-theoretic and all fibers reduced, the restriction of the differential of $P_H \ra |H|$ to $A$ is the differential of the projection $A \ra l$. The rank of the differential of $P_H \ra |H|$ is not maximal at $p$ (see loc. cit.), i.e., its image is a proper subspace of the tangent space of $|H|$ at $t$. Since $l$ is generic, the tangent space of $l$ at $t$ intersects this image in $0$. Therefore the differential of $A \ra l$ is $0$ at $p$.
\end{proof}

To summarize, our family of (compactified) Prym varieties and theta divisors
\[
\T\lra A\lra l
\]
has smooth total spaces, $240$ fibers where the theta divisor has a single node, $60$ fibers where theta has two nodes, and $42$ fibers where theta is as in Proposition \ref{propdegtheta}.

\section{Local monodromy representations near $N_0$} \label{secN0monodromy}
\subsection{Local monodromy near $\theta_{null}$}\label{subsecnull}
The local monodromy representation on the cohomology of the theta divisor near a general point $(A_0,\T_0)\in\theta_{null}$ is given by the classic Picard-Lefschetz formula. Fix a point $p_0\in l\cap\theta_{null}$ and pick a small disk $U\subset l$ containing $p_0$. We have a family of theta divisors with smooth total space $\T_U$ (see Proposition \ref{proptotsmooth}):
\[
\xymatrix{\T_0\ar[r]\ar[d]&\T_U\ar[d]\\
p_0\ar[r]&U.}
\]
The local monodromy representation on the cohomology of a general fiber $\T_t$ for $t\in U\setminus\{p_0\}$
\[
\rho:\pi_1(U\setminus\{p_0\},t)\longrightarrow GL(H^k(\T_t))
\]
is trivial when $k\ne4$. When $k=4$, the Picard-Lefschetz formula (see, for instance, \cite[p. 93]{Landman1973}) shows that $\rho(\pi_1(U\setminus\{p_0\},t))$ is generated by 
\[
\begin{array}{ccc}
T_U: H^4(\T_t) & \lra & H^4(\T_t) \\
\alpha & \mapsto & \alpha-\langle\alpha,\gamma\rangle\gamma
\end{array}
\]
where $\langle,
\rangle$ is the intersection product on $H^4(\T_t)$, and $\gamma\in H^4(\T_t)$ is the class of the vanishing $4$-sphere with $\langle\gamma,\gamma\rangle=2$.

One checks immediately that 
\begin{eqnarray}\label{eqnLP}
T_U^2=Id. 
\end{eqnarray}

\subsection{Local monodromy near $N_0'$}\label{subsectwonodes}
Fix a point $p_0\in l\cap N_0'$ and a small disk $U\subset l$ containing $p_0$. The central fiber $\T_0$ of the family $\T_U$ has two ordinary double points $x$ and $-x$. The local analysis in \cite[pp. 99-100, 103-106]{Landman1973} applies and we still have
\begin{eqnarray}\label{eqnmono}
T_U^2=Id. 
\end{eqnarray}

\section{General facts about the Clemens-Schmid exact sequence}\label{secClemensSchmid}
We briefly review some general facts about the Clemens-Schmid exact sequence. We will apply the general theory in this section to compute the local monodromy representations near the boundary $\Delta$.

\subsection{The Clemens-Schmid exact sequence}
Let
\[\xymatrix{Y_0\ar[r]\ar[d]&\cY\ar[d]&Y_t\ar[l]_-{i_t}\ar[d]\\
\{0\}\ar[r]&V&\{t\}\ar[l]}
\] 
be a one-parameter semistable degeneration (i.e., the total space $\cY$ is smooth and the central fiber $Y_0$ is reduced with simple normal crossing support) over a small disk $V$, and $0\ne t\in \partial V$ a general point. The total space $\cY$ deformation retracts to $Y_0$.
For such a family, the image of the monodromy representation
\[
\rho: \pi_1(V\setminus\{0\},t)\lra GL(H^\bullet(Y_t))
\]
is generated by a unipotent operator $T:H^\bullet(Y_t)\ra H^\bullet(Y_t)$, i.e. $(T-Id)^k=0$ for some integer $k$ \cite{Landman1973}. Thus 
\[
N:=\log T:=(T-Id)-\frac{1}{2}(T-Id)^2+\frac{1}{3}(T-Id)^3+...\] 
is nilpotent.

It follows from the work of Clemens-Schmid \cite{clemens77}, \cite{schmid73} and
Steenbrink \cite{Steenbrink1975} that one can define mixed Hodge structures on $H^\bullet(Y_t)$,
$H^\bullet(\cY)$ and $H_\bullet(\cY)$ such that we have an exact sequence of mixed Hodge structures (with suitable weight shifts)
\begin{eqnarray}\label{eqCS}
\xymatrix{\ar[r]&H_{2n+2-m}(\cY)\ar[r]^-{\alpha}&H^m(\cY)\ar[r]^-{i_t^*}&H^m(Y_t)_{\lim}\ar[r]^-{N}&H^m(Y_t)_{\lim}\ar[r]^-{\beta}&
  H_{2n-m}(\cY)\ar[r]&}
\end{eqnarray}
where $n$ is the relative dimension of the fibration, $\alpha$ is the composition
\begin{eqnarray}
\xymatrix{H_{2n+2-m}(\cY)\ar[r]^-{\text{ PD}}&H^m(\cY,\partial\cY)\ar[r]&H^m(\cY),}
\end{eqnarray}
and $\beta$ is the composition
\begin{eqnarray}\label{eqnbeta}
\xymatrix{H^m(Y_t)\ar[r]^-{\text{PD}}&H_{2n-m}(Y_t)\ar[r]^-{i_{t*}}&H_{2n-m}(\cY).}
\end{eqnarray}
Here `PD' stands for Poincar\'e duality. The mixed Hodge structure on $H^\bullet(Y_t)$ is not the usual pure Hodge structure but rather the `limit mixed Hodge structure' (c.f. Section \ref{subseclim}). We use the notation $H^\bullet(Y_t)_{\lim}$ to distinguish it from the pure Hodge structure.
\subsection{The weight filtrations on $H^m(\cY)$ and $H_m(\cY)$}\label{subsecspseq}   Put 
\begin{eqnarray}
 &&H^m :=H^m(\cY)\cong H^m(Y_0),\nonumber\\
 &&H_m :=H_m(\cY)\cong H_m(Y_0).\nonumber
\end{eqnarray}
Recall from \cite[p. 103]{morrison84} that there is a Mayer-Vietoris type spectral sequence abutting to $H^{\bullet}
(Y_0)$ with $E_1$ term 
\[
E_1^{p,q} = H^q (Y_0^{[p]}).
\]
Here $Y_0^{[p]}$ is the disjoint union of the codimension $p$ strata of $Y_0$, i.e.,
\[
Y_0^{[p]} := \coprod_{i_0 < \ldots < i_p} Z_{i_0}\cap \ldots \cap Z_{i_p}
\]
where the $Z_i$ are distinct irreducible components of $Y_0$. 
 
The differential $d_1$
\[
\xymatrix{E_1^{p,q}\ar[d]^-{\cong}\ar[r]^-{d_1}&E_1^{p+1,q}\ar[d]^-{\cong}\\
H^q(Y_0^{[p]})\ar[r]^-{d_1}&H^q(Y_0^{[p+1]})}\]
is the alternating sum of the restriction maps on all the irreducible components.
By \cite[p. 103]{morrison84} this sequence degenerates at $E_2$. 

The weight filtration is given by
\[
W_kH^m := \oplus_{p+q=m,\ q\le k} E_{\infty}^{p,q} = \oplus_{p+q=m,\ q\le k} E_{2}^{p,q}.
\]
Therefore the weights on $H^m$ go from $0$ to $m$ and 
\[
Gr_kH^m\cong E_2^{m-k,k}=\frac{\Ker(d_1: H^{k}(Y_0^{[m-k]})\ra H^{k}(Y_0^{[m-k+1]})}{\Img(d_1: H^{k}(Y_0^{[m-k-1]})\ra H^{k}(Y_0^{[m-k]})}.
\]
There is also a weight filtration on $H_m$:
\[
W_{-k}H_m:=(W_{k-1}H^m)^{\perp}
\]
under the perfect pairing between $H^m$ and $H_m$. With this definition,
\[
Gr_{-k}H_m\cong (Gr_k H^m)^{\lor}.
\]

\subsection{The limit mixed Hodge structure $H^m(Y_t)_{\lim}$}\label{subseclim} 
The weight filtration associated to the nilpotent operator $N$ has the following form,
\[
0\subset W_0\subset W_1\subset...\subset W_{2m}=H^m(Y_t).
\]
We refer to
\cite[pp. 106-109]{morrison84} for the precise definition of the monodromy weight filtration and only summarize the properties we need here.

In the applications in this paper, the nilpotent operator $N$ satisfies
\[
N^2=0.
\]
Thus the monodromy weight filtration satisfies the following
\begin{eqnarray}
&& W_k=0  \ \hbox{ for } \ k\le m-2,\nonumber\\
&& W_{m-1}=\Img(N), \nonumber\\
 && W_m=\Ker(N),\nonumber\\
 && W_k= H^m(Y_t) \ \text{ for } \ k\ge m+1.\nonumber
\end{eqnarray}

Let $K^m_t :=\Ker (N)\subset H^m(Y_t)$ be the monodromy invariant subspace. It inherits an induced weight filtration from $H^m(Y_t)$. 
The graded pieces of $H^m(Y_t)_{\lim}$ thus satisfy
\begin{eqnarray}
&&\label{eqnKHlim}Gr_{m}H^m(Y_t)_{\lim}\cong Gr_mK^m_t\cong\frac{\Ker(N)}{\Img(N)}\\
&&\label{eqnsym}Gr_{m+1}H^m(Y_t)_{\lim}\stackrel{N}\cong Gr_{m-1}H^m(Y_t)_{\lim}\cong Gr_{m-1}K^m_t\cong \Img(N).
\end{eqnarray}

The weight filtrations on $H^m$ and $K_t^m$ are related by the Clemens-Schmid exact sequence. Below are the basic facts we will use (see \cite[pp. 107-109]{morrison84})

\begin{enumerate}
\item $i_t^*$ induces an isomorphism
\begin{eqnarray}\label{Grsmallk}\xymatrix{Gr_kH^m\ar[r]^-{\cong}& Gr_kK^m_t}\ \hbox{ for }\ k\le m-1.
\end{eqnarray}
 \item There is an exact sequence
\begin{eqnarray}\label{seqGR4}\xymatrix{0\ar[r]&Gr_{m-2}K^{m-2}_t\ar[r]&Gr_{m-2n-2}H_{2n+2-m}\ar[r]^-{\alpha}&Gr_mH^m\ar[r]&Gr_mK^m_t\ar[r]&0}.
\end{eqnarray}
\end{enumerate}

The limit Hodge filtration on $H^m(Y_t)_{\lim}$ is given by (\cite{morrison84}, \cite{schmid73})
\begin{eqnarray}\label{eqnlimit}
F_\infty^p=\lim_{\im z\to\infty}\exp(-zN) F^p(z)
\end{eqnarray}
where $f:U'\ra U\setminus\{0\}$, $f(z)=e^{2\pi iz}$ is the universal cover of the punctured disk and $F^p$ is the usual Hodge filtration on $H^m(Y_{f(z)})$ on the fixed underlying space $H^m(Y_t)$.

\section{Local monodromy near the boundary $\Delta$}\label{secboundary} Near the boundary $\Delta$, the family of Prym varieties $A_U\ra U$ parametrized by a small disk $U\subset l$ has smooth general fiber $(A_t,\T_t)$ and central fiber $({P},\Upsilon)$ as in Proposition \ref{propdegtheta}. We use the Clemens-Schmid exact sequence to compute the monodromy action.

\subsection{The semi-stable reduction}\label{subsecbdy}
For the semi-stable reduction of any family, one usually blows up the non-normal crossings loci of the central fiber. As the total space $A_U$ is smooth, to avoid multiple components in the central fiber, we first make a ramified base change $V\ra U$ of order $2$ of the family
\[ 
\xymatrix{A_V\ar[r]\ar[d]&A_U\ar[d]\\
V\ar[r]&U.}
\] 
We then blow up the singular locus ${P}\setminus G_{pq}$ of $A_V$ to obtain the family
$\widetilde{A}_V\ra V$.

\begin{proposition}\label{propA0}The central fiber $\widetilde{A}_0$ of the family $\widetilde{A}_V \ra V$ is the union of two copies $P_1^\nu$ and $P_2^\nu$ of $P^\nu$, with $B_0\subset P^\nu_1$ identified with $B_\infty\subset P^\nu_2$ via the identity map and $B_\infty\subset P^\nu_1$  identified with $B_0\subset P^\nu_2$ via translation by $b$. The intersection $P_1^\nu\cap P_2^\nu=B_{0\infty}\amalg B_{\infty0}$ is the disjoint union of two copies of $B$. \end{proposition}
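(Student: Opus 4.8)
The plan is to perform the semistable reduction explicitly and read off the central fiber by tracking what happens to the compactified Prym variety $P$ under the base change and blow-up. The starting point is the geometry established in Section~\ref{subseccompPrym}: the singular locus of $A_V$ is precisely $P \setminus G_{pq} = B_0 \amalg B_\infty$, the two sections along which the $\bP^1$-bundle $P^\nu$ is glued to form $P$. Since the base change $V \ra U$ is ramified of order $2$ at $p_0$, the total space $A_V$ acquires, transverse to each component of its singular locus, the local structure of the threefold-type node $xy = t^2$ (the square coming from the ramification). The standard recipe for resolving such a family by a single blow-up of the (codimension-two) singular locus will produce a semistable model whose central fiber is a normal-crossing union.

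First I would identify the normalization of $A_0 = P$. By construction $P$ is obtained from $P^\nu$ by gluing $B_0$ to $B_\infty$ via translation by $b$, so $P$ has a single component that is ``self-glued'' along two copies of $B$. Under the degree-two base change and the blow-up of the singular locus, this self-gluing gets separated into \emph{two} copies of $P^\nu$, because passing to the double cover unfolds the self-intersection into a genuine intersection of two sheets. This is the mechanism that produces the two components $P_1^\nu$ and $P_2^\nu$ of $\widetilde{A}_0$ asserted in the statement. The key bookkeeping step is to match up the sections correctly: the gluing data of $P$ (namely $B_0 \sim_{\mathrm{id}} B_\infty$ at one locus and via translation by $b$ at the other — or rather, a single identification $B_0 \stackrel{t_b}{\cong} B_\infty$ that, when unfolded, distributes onto the two components) must be transported through the base change so that $B_0 \subset P_1^\nu$ meets $B_\infty \subset P_2^\nu$ via the identity and $B_\infty \subset P_1^\nu$ meets $B_0 \subset P_2^\nu$ via translation by $b$. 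I would verify this by writing the gluing in local analytic coordinates adapted to each of the two sections and checking which identification survives on each sheet.

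The double locus computation is then routine: each exceptional divisor of the blow-up contributes an intersection, and since the singular locus $B_0 \amalg B_\infty$ has two connected components, one obtains $P_1^\nu \cap P_2^\nu = B_{0\infty} \amalg B_{\infty 0}$, a disjoint union of two copies of $B$ (each a section $\cong B$ of the respective $\bP^1$-bundle). I would confirm reducedness and the simple-normal-crossing condition by a local coordinate check: after the order-two base change the local equation $xy = t$ becomes $xy = s^2$, and blowing up $\{x=y=0\}$ resolves this into two smooth sheets meeting transversally along the central fiber, which is exactly the semistable normal-crossing picture needed to apply the Clemens--Schmid sequence.

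\emph{The main obstacle} I anticipate is getting the two gluing identifications on $\widetilde{A}_0$ correct — specifically, tracking which of the two intersection loci is glued by the identity and which by translation by $b$. The self-gluing of $P$ involves the single translation $t_b$, and one must be careful that, upon unfolding via the double cover, the two sheets inherit \emph{complementary} gluing data (identity on one pair of sections, translation by $b$ on the other) rather than, say, translation by $b$ on both or by $2b$. This is precisely the point where the asymmetry between the zero and infinity sections in the construction of $P$ (Proposition~\ref{propdegtheta} and items (1)--(3) of Section~\ref{subseccompPrym}) must be threaded carefully through the base change; the rest of the argument is a standard local-model resolution of nodes in a one-parameter family.
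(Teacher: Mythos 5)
Your proposal rests on a structural misreading of the degeneration, and this causes it to skip the one thing that actually needs proof. First, the singular locus of $A_V$ is not $B_0\amalg B_\infty$: in the compactified Prym $P=A_0$ the sections $B_0$ and $B_\infty$ have already been identified, so $P\setminus G_{pq}$ is a single connected copy of $B$ (the non-normal locus of $P$); the disjoint union $B_0\amalg B_\infty$ is its preimage in the normalization $P^\nu$. Second, and more seriously, the component $P_2^\nu$ is \emph{not} produced by the double cover ``unfolding the self-intersection into two sheets.'' The base change disconnects nothing: the central fiber of $A_V\to V$ is still $P$, and its strict transform under the blow-up is the single irreducible variety $P^\nu$ --- that is $P_1^\nu$, and it accounts for only one of the two components. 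The other component, $P_2^\nu$, is the \emph{exceptional divisor} of the blow-up. Your own local model shows this: $A_V$ is locally $\{xy=s^2\}$, singular along $\{x=y=s=0\}\cong B$, and blowing up this reduced locus (not the ideal $(x,y)$, whose blow-up $\{x^2v=s^2\}$ is still singular) yields a central fiber consisting of the two strict-transform branches of $\{xy=0\}$ --- which globally both lie on the one irreducible component $P_1^\nu$ --- together with the exceptional divisor, a bundle of smooth conics $\{XY=S^2\}\subset\bP^2$ over $B$. The two branches do not meet each other after the blow-up; each meets only the exceptional divisor, in disjoint sections. That is the true reason the double locus is $B_{0\infty}\amalg B_{\infty 0}$: the normalization $\nu:P^\nu\to P$ is $2$-to-$1$ over the non-normal locus, not ``because the singular locus has two components'' (it does not).

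As a consequence, your proposal never proves the central assertion that this exceptional divisor is isomorphic to $P^\nu$, i.e.\ to $\bP_B\left(\cO_B(\Xi-\Xi_b)\oplus\cO_B(\Xi_b-\Xi)\right)$; in your picture both components are copies of $P^\nu$ by fiat, but a priori the exceptional divisor is just some conic bundle over $B$ and identifying it is exactly where the work lies. The paper does this by using semistability of $\widetilde{A}_V\to V$ to get $N_{B_{0\infty}/P_2^\nu}\cong N^\lor_{B_{0\infty}/P_1^\nu}\cong\cO_B(\Xi_b-\Xi)$, recognizing $P_2^\nu\setminus B_{\infty 0}$ and $P_2^\nu\setminus B_{0\infty}$ as the total spaces of $\cO_B(\Xi_b-\Xi)$ and $\cO_B(\Xi-\Xi_b)$ respectively, and concluding $P_2^\nu\cong\bP_B\left(\cO_B(\Xi-\Xi_b)\oplus\cO_B(\Xi_b-\Xi)\right)\cong P^\nu$. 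For the gluing data --- the point you flagged as the main obstacle --- the paper's argument is a one-line observation you could adopt: contracting $P_2^\nu$ recovers $P$, in which the zero and infinity sections of $P_1^\nu$ are identified via translation by $b$, so the two identifications across $P_2^\nu$ must compose to that translation, and the labels can be normalized so that one is the identity and the other is translation by $b$. Your plan to check gluings in local coordinates is reasonable in spirit, but as written it is applied to the wrong components, so the proposal cannot be repaired without first correcting the description of $\widetilde{A}_0$.
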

\begin{proof} Clearly the main component $P_1^\nu\cong P^\nu$. We will show the exceptional divisor $P_2^\nu$ is also isomorphic to $P^\nu$. In the semistable family $\widetilde{A}_V\ra V$, we have 
\[
N^\lor_{B_{0\infty}/P_1^\nu}\cong N_{B_{0\infty}/P_2^\nu}.
\]
Therefore $P_2^\nu$ contains the total space of $\cO_B(\Xi_b-\Xi)\cong\cO_{B_0}(-B_0)\cong N_{B_{0\infty}|P_2^\nu}=P^\nu_2\setminus B_{\infty0}$ as a Zariski open subset. Applying the same argument to $B_{\infty0}$, we see that $P_2^\nu$ also contains the total space of $\cO_B(\Xi-\Xi_b)\cong N_{B_{\infty0}|P^\nu_2}=P^\nu_2\setminus B_{0\infty}$ as an open subset. We conclude that $P_2^\nu\cong\bP_B(\cO_B(\Xi-\Xi_b)\oplus\cO_B(\Xi_b-\Xi))\cong P^\nu$. The statement about the gluing follows from the fact that after contracting $P^\nu_2$, the infinity and zero sections of $P_1^\nu$ are identified via translation by $b$.
\end{proof}

\begin{corollary}\label{proptheta0}The central fiber $\widetilde{\T}_0$ of the family $\widetilde{\T}_V \ra V$ is the union $\Upsilon^\nu\cup Q_\Xi$, where $\Upsilon^\nu=Bl_{\Xi\cap\Xi_b}B$ and the conic bundle $Q_\Xi$ is the restriction of $P_2^\nu\ra B$ to $\Xi$. The intersection $\Upsilon^\nu\cap Q_\Xi=\Xi_{0\infty}\amalg \Xi_{\infty0}$ is the disjoint union of two copies of $\Xi$.
\end{corollary}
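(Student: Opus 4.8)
The plan is to describe $\widetilde{\T}_0$ as the divisor cut out on the central fibre $\widetilde{A}_0=P_1^\nu\cup P_2^\nu$ of Proposition \ref{propA0} by the pulled-back theta section, and then to read off its two pieces by restricting to $P_1^\nu$ and to the exceptional divisor $P_2^\nu$ in turn. Write $\beta:\widetilde{A}_V\ra A_V$ for the resolution obtained by blowing up the singular locus $P\setminus G_{pq}\cong B$ of $A_V$, so that $P_2^\nu=\beta^{-1}(P\setminus G_{pq})$ is the exceptional conic bundle; denote its projection $p_2:P_2^\nu\ra B$. Since the theta divisor is a principal polarization, $\cO_{A_V}(\T_V)$ is a line bundle (Proposition \ref{proptotsmooth}); let $s$ be a section cutting out $\T_V$. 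The first observation I would make is that $\T_V$ does not contain the blown-up centre $P\setminus G_{pq}$ (its generic point is not a limit of theta divisors), so $\mathrm{mult}_{P\setminus G_{pq}}\T_V=0$ and the proper transform equals the total transform: $\widetilde{\T}_V=\beta^*\T_V$. Hence $\widetilde{\T}_0=\mathrm{div}(\beta^*s)|_{\widetilde{A}_0}$, and everything reduces to restricting $\beta^*s$ to the two components.

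On the main component $P_1^\nu\cong P^\nu$ the map $\beta$ is just the normalization of $P$ along its double locus, so $\beta^*s$ restricts to the section $\sigma_0+\sigma_\infty$ of $L^\nu$ studied in Proposition \ref{propdegtheta}. Therefore $\widetilde{\T}_0\cap P_1^\nu=\Upsilon^\nu=Bl_{\Xi\cap\Xi_b}B$, exactly as in that proposition.

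The crux is the exceptional component, and here is the one slightly unexpected point. For the exceptional divisor of a blow-up the restriction of a pulled-back section is the pullback, via $p_2$, of its restriction to the centre: $\beta^*s|_{P_2^\nu}=p_2^*\bigl(s|_{P\setminus G_{pq}}\bigr)$. (Equivalently, each fibre $F$ of $p_2$ is contracted by $\beta$, so the projection formula gives $\beta^*\cO(\T_V)\cdot F=\cO(\T_V)\cdot\beta_*F=0$, i.e. fibre degree $0$.) Consequently $\widetilde{\T}_0\cap P_2^\nu$ is a union of \emph{full} fibres of $p_2$, namely those over the zero locus of $s|_{P\setminus G_{pq}}$. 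By Proposition \ref{propdegtheta} the limit theta divisor $\Upsilon$ meets $B_0$ along $\Xi$ and $B_\infty$ along $\Xi_b$, and under the identification $P\setminus G_{pq}\cong B$ these are glued to a single divisor $\Xi\subset B$; thus $\T_V\cap(P\setminus G_{pq})=\Xi$ and
\[
\widetilde{\T}_0\cap P_2^\nu=p_2^{-1}(\Xi)=Q_\Xi,
\]
the full conic bundle over $\Xi$, with multiplicity one. That one obtains the whole conic bundle rather than a mere section is forced by the contraction of the $p_2$-fibres, and this is the step I expect a reader to find least obvious.

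Finally I would compute $\Upsilon^\nu\cap Q_\Xi$ along the double loci $B_{0\infty}\amalg B_{\infty0}$ of $\widetilde{A}_0$. On $B_{0\infty}=(B_0\subset P_1^\nu)=(B_\infty\subset P_2^\nu)$ Proposition \ref{propdegtheta} gives $\Upsilon^\nu\cap B_0=\Xi$, while $Q_\Xi\cap B_\infty=p_2^{-1}(\Xi)\cap B_\infty=\Xi$; the two agree, yielding a copy $\Xi_{0\infty}$ of $\Xi$. On $B_{\infty0}=(B_\infty\subset P_1^\nu)=(B_0\subset P_2^\nu)$ one has $\Upsilon^\nu\cap B_\infty=\Xi_b$ and $Q_\Xi\cap B_0=\Xi$; since by Proposition \ref{propA0} this gluing is translation by $b$, which carries $\Xi$ to $\Xi_b$, the two again match and give a copy $\Xi_{\infty0}$ of $\Xi$. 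Hence $\Upsilon^\nu\cap Q_\Xi=\Xi_{0\infty}\amalg\Xi_{\infty0}$ is a disjoint union of two copies of $\Xi$, as claimed. The only points genuinely needing care are the vanishing of the multiplicity along the centre in the first step and the matching of $\Xi$ with $\Xi_b$ under the translation gluing; both are controlled by Propositions \ref{propA0} and \ref{propdegtheta}.
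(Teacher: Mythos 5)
Your proof is correct and takes exactly the route the paper intends: the paper's own proof of this corollary is simply ``Immediate,'' meaning that the central fibre of the theta family is to be read off from Proposition \ref{propA0} together with Proposition \ref{propdegtheta}, which is precisely what you do by restricting the pulled-back theta section to the main component $P_1^\nu$ (where it gives $\Upsilon^\nu$) and to the exceptional conic bundle $P_2^\nu$ (where, having fibre degree zero, its divisor is the full preimage $p_2^{-1}(\Xi)=Q_\Xi$). The supporting details you supply --- the total transform equals the proper transform because the blow-up centre is not contained in $\T_V$, and the matching of $\Xi$ with $\Xi_b$ under the translation-by-$b$ gluing of Proposition \ref{propA0} --- are accurate and are the correct justification of the intersections $\Xi_{0\infty}\amalg\Xi_{\infty0}$.
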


\begin{proof}Immediate.
\end{proof}

\subsection{The weight filtration on $H^m(\widetilde{A}_0)$}
By Section \ref{subsecspseq} and Proposition \ref{propA0}, the weight filtration on $H^m(\widetilde{A}_0)$ only has the following possibly nontrivial graded pieces
\[
Gr_mH^m(\widetilde{A}_0)=\Ker(d_1:H^m(P_1^\nu)\oplus H^m(P_2^\nu)\lra H^m(B_{0\infty})\oplus H^m(B_{\infty0}))
\]
and
\[
Gr_{m-1}H^m(\widetilde{A}_0)=\Coker (d_1:H^{m-1}(P_1^\nu)\oplus H^{m-1}(P_2^\nu)\lra H^{m-1}(B_{0\infty})\oplus H^{m-1}(B_{\infty0}))
\]
\begin{proposition}\label{propGrA}We have
\[
Gr_mH^m(\widetilde{A}_0)\cong H^{m-2}(B)\oplus H^m(P^\nu) \cong H^{m-2}(B)^{\oplus 2}\oplus H^m(B),
\]
and
\[
Gr_{m-1}H^m(\widetilde{A}_0)\cong H^{m-1}(B).
\]
\end{proposition}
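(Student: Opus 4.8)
The plan is to feed the projective-bundle structure of $P^\nu\to B$ into the Mayer--Vietoris spectral sequence of Section~\ref{subsecspseq}. That section has already identified $Gr_mH^m(\widetilde A_0)$ and $Gr_{m-1}H^m(\widetilde A_0)$ with the kernel and cokernel of the single differential
\[
d_1:H^\bullet(P_1^\nu)\oplus H^\bullet(P_2^\nu)\lra H^\bullet(B_{0\infty})\oplus H^\bullet(B_{\infty0}),
\]
so the whole computation reduces to making $d_1$ explicit. The two inputs I will use are the cohomology of $P^\nu$ as a $\bP^1$-bundle and the topological triviality recorded in Remark~\ref{rktrivial}.

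First I would compute $H^\bullet(P^\nu)$ by Leray--Hirsch, taking the section class $\xi:=[B_0]\in H^2(P^\nu)$ as generator (it restricts to the point class on every fibre), which gives, as Hodge structures, $H^m(P^\nu)\cong H^m(B)\oplus H^{m-2}(B)\cdot\xi$, the second summand Tate-twisted by $(-1)$. The crucial point is the restriction to the two sections. Identifying $B_0$ and $B_\infty$ with $B$ via $p$, the class $p^*\beta$ restricts to $\beta$ on either section, while
\[
\xi|_{B_0}=c_1(N_{B_0|P^\nu})=c_1(\cO_B(\Xi-\Xi_b))=0,\qquad \xi|_{B_\infty}=[B_0]|_{B_\infty}=0,
\]
the first vanishing because $\Xi-\Xi_b$ is algebraically, hence topologically, trivial (this is exactly the content of Remark~\ref{rktrivial}), the second because $B_0$ and $B_\infty$ are disjoint. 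Consequently the restriction of $p^*\beta+p^*\gamma\cdot\xi$ to either section is simply $\beta$, independent of the $\xi$-component $\gamma$.

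Next I would assemble $d_1$ using the gluing of Proposition~\ref{propA0}. The one subtlety is the translation by $b$ entering the identification of $B_\infty\subset P_1^\nu$ with $B_0\subset P_2^\nu$; but translations on the abelian variety $B$ are homotopic to the identity, so $t_b^*=\mathrm{id}$ on $H^\bullet(B)$ and the twist disappears on cohomology. Writing $\alpha_i=p^*\beta_i+p^*\gamma_i\cdot\xi_i$ with $\beta_i\in H^m(B)$ and $\gamma_i\in H^{m-2}(B)$, both components of $d_1$ then equal $\beta_1-\beta_2$, i.e. $d_1(\alpha_1,\alpha_2)=(\beta_1-\beta_2,\ \beta_1-\beta_2)$. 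Taking the cokernel in degree $m-1$ collapses $H^{m-1}(B)^{\oplus2}$ along its diagonal, yielding $Gr_{m-1}H^m(\widetilde A_0)\cong H^{m-1}(B)$. Taking the kernel in degree $m$ gives the locus $\beta_1=\beta_2$, parametrized freely by $(\beta,\gamma_1,\gamma_2)\in H^m(B)\oplus H^{m-2}(B)^{\oplus2}$, which recombines as $H^m(P^\nu)\oplus H^{m-2}(B)$, as claimed.

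The hard part will be forcing the two restriction maps to coincide: this rests on the two independent vanishings above, namely $\xi|_{B_0}=\xi|_{B_\infty}=0$ (topological triviality of the $\bP^1$-bundle) and $t_b^*=\mathrm{id}$ (homotopy-triviality of translation). Once these are in hand the differential depends only on the base classes $\beta_i$, the $\xi$-components drop out entirely, and the numerology falls out. I would finally note that every identification used---Leray--Hirsch through the divisor class $\xi$, the restrictions to sections, and $t_b^*$---is a morphism of Hodge structures, so the displayed isomorphisms hold as pure Hodge structures.
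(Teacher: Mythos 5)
Your proposal is correct and takes essentially the same route as the paper: the paper's (much terser) proof invokes the topological triviality of $P^\nu\ra B$ from Remark~\ref{rktrivial}, the K\"unneth formula, and the gluing data of Proposition~\ref{propA0}, which is precisely what your Leray--Hirsch decomposition, the vanishings $\xi|_{B_0}=\xi|_{B_\infty}=0$, and the triviality of $t_b^*$ on cohomology spell out explicitly. Your write-up simply fills in the details the paper leaves as ``follow easily.''
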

\begin{proof}By Remark \ref{rktrivial}, $P^\nu\ra B$ is a topologically trivial $\bP^1$ bundle. The statements then follow easily from Proposition \ref{propA0} and the K$\ddot{\text{u}}$nneth formula. Note that, in the first formula, since the differential $d_1$ is the difference of the restriction maps from $P_1^{\nu}$ and $P_2^{\nu}$, its kernel $Gr_mH^m(\widetilde{A}_0)\cong H^{m-2}(B)\oplus H^m(P^\nu)$ is embedded diagonally in $H^m(P_1^\nu)\oplus H^m(P_2^\nu)\cong H^m(P^\nu)\oplus H^m(P^\nu)$.
\end{proof}

\begin{corollary}\label{corGrAt}The monodromy weight filtration on $H^m(A_t)_{\lim}$ satisfies
\[
Gr_{m+1}H^m({A}_t)_{\lim}\cong Gr_{m-1}H^m({A}_t)_{\lim}\cong H^{m-1}(B)
\]
and
\[
Gr_mH^m(A_t)_{\lim}\cong H^m (B) \oplus H^{m-2} (B).
\]
\end{corollary}
\begin{proof} For the first part, by (\ref{eqnsym}) and (\ref{Grsmallk}) we have $Gr_{m+1}H^m({A}_t)_{\lim}\cong Gr_{m-1}H^m({A}_t)_{\lim}\cong Gr_{m-1}H^m(\widetilde{A}_0)$ which is isomorphic to $H^{m-1}(B)$ by Proposition \ref{propGrA}. For the second part, use \eqref{eqnKHlim} and Proposition \ref{propGrA}, and apply sequence \eqref{seqGR4} repeatedly.
\end{proof}

\subsection{The weight filtration on $H^m(\widetilde{\T}_0)$}
By Section \ref{subsecspseq} and Proposition \ref{proptheta0}, the weight filtration on $H^m(\widetilde{\T}_0)$ only has the following possibly nontrivial graded pieces
\[
Gr_mH^m(\widetilde{\T}_0)=\Ker(d_1:H^m(\Upsilon^\nu)\oplus H^m(Q_\Xi)\lra H^m(\Xi_{0\infty})\oplus H^m(\Xi_{\infty0}))
\]
and
\[
Gr_{m-1}H^m(\widetilde{\T}_0)=\Coker (d_1:H^{m-1}(\Upsilon^\nu)\oplus H^{m-1}(Q_\Xi)\lra H^{m-1}(\Xi_{0\infty})\oplus H^{m-1}(\Xi_{\infty0}))
\]
\begin{proposition}\label{propGrtheta}
We have
\[
Gr_mH^m(\widetilde{\T}_0)\cong H^m(B)\oplus H^{m-2}(\Xi\cap\Xi_b)\oplus H^{m-2}(\Xi)   ,
\]
and
\[
Gr_{m-1}H^m(\widetilde{\T}_0)\cong H^{m-1}(\Xi).
\]
\end{proposition}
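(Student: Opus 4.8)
The plan is to compute the two Mayer–Vietoris differentials using the explicit geometry from Corollary \ref{proptheta0}, namely that $\widetilde{\T}_0 = \Upsilon^\nu \cup Q_\Xi$ with $\Upsilon^\nu = Bl_{\Xi\cap\Xi_b}B$, $Q_\Xi$ the restriction of the $\bP^1$-bundle $P_2^\nu \ra B$ over $\Xi$, and double locus $\Upsilon^\nu \cap Q_\Xi = \Xi_{0\infty}\amalg\Xi_{\infty0}$, two copies of $\Xi$. First I would record the cohomology of each piece. For the blow-up, $H^\bullet(\Upsilon^\nu) = H^\bullet(B) \oplus H^{\bullet-2}(\Xi\cap\Xi_b)$ by the blow-up formula, since $\Xi\cap\Xi_b$ is a smooth codimension-two center. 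For the conic bundle $Q_\Xi$, Remark \ref{rktrivial} tells us $P^\nu \ra B$ is a topologically trivial $\bP^1$-bundle, so $Q_\Xi \ra \Xi$ is topologically trivial and the K\"unneth formula gives $H^\bullet(Q_\Xi) \cong H^\bullet(\Xi) \oplus H^{\bullet-2}(\Xi)$. The restriction maps to the two copies of $\Xi$ in the double locus are then understood through these identifications.

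Next I would analyze the differential $d_1$ on $H^{m-1}$ to get the $Gr_{m-1}$ piece. The map sends $(a,c) \in H^{m-1}(\Upsilon^\nu)\oplus H^{m-1}(Q_\Xi)$ to its alternating restriction difference in $H^{m-1}(\Xi_{0\infty})\oplus H^{m-1}(\Xi_{\infty0}) \cong H^{m-1}(\Xi)^{\oplus 2}$. The key observation is that the restriction $H^{m-1}(\Upsilon^\nu) \ra H^{m-1}(\Xi)$ along each section factors through $H^{m-1}(B) \ra H^{m-1}(\Xi)$ (the exceptional summand restricts suitably), while the conic-bundle side surjects onto each $H^{m-1}(\Xi)$ via its pullback summand. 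Taking both copies together, one of the two copies of $H^{m-1}(\Xi)$ is hit surjectively while the differences conspire so that exactly one copy survives in the cokernel, giving $Gr_{m-1}H^m(\widetilde{\T}_0) \cong H^{m-1}(\Xi)$, parallel to the calculation of $Gr_{m-1}H^m(\widetilde{A}_0) \cong H^{m-1}(B)$ in Proposition \ref{propGrA}.

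For $Gr_m$, I would compute the kernel of $d_1$ on $H^m$. The target is $H^m(\Xi)^{\oplus 2}$, and I would track each K\"unneth/blow-up summand through the two restriction maps. The summand $H^m(B)$ of $H^m(\Upsilon^\nu)$ restricts to $H^m(\Xi)$; the summand $H^{m-2}(\Xi\cap\Xi_b)$ from the blow-up restricts trivially (the exceptional divisor meets each section transversally in the center, contributing through $H^{m-2}(\Xi\cap\Xi_b)$ to the kernel); and on the conic-bundle side $H^m(\Xi)\oplus H^{m-2}(\Xi)$ the fiber-class summand $H^{m-2}(\Xi)$ lies in the kernel while the base summand $H^m(\Xi)$ matches against $H^m(B)$'s image. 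Assembling the surviving pieces yields $H^m(B)\oplus H^{m-2}(\Xi\cap\Xi_b)\oplus H^{m-2}(\Xi)$ for $m \le 4$.

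The main obstacle I anticipate is bookkeeping the restriction maps precisely enough to identify which summands cancel and which persist, since the two sections $B_0,B_\infty$ of $P^\nu$ are glued by translation by $b$ (Proposition \ref{propA0}), so the two copies $\Xi_{0\infty}$ and $\Xi_{\infty0}$ are identified with $\Xi$ via maps differing by this translation. I would need the translation-invariance of the relevant cohomology classes on the abelian variety $B$ to ensure the pullbacks along both sections agree on $H^\bullet(B)$, which is what makes the alternating differential behave as in the abelian case and collapses the two copies to one. The restriction $m \le 4$ in the first statement presumably reflects where this summand description is exactly the associated graded, beyond which Poincar\'e duality on the odd-dimensional quadric fibers or the blow-up could introduce extra classes; I would verify the range by checking that $H^{>4}$ of the center $\Xi\cap\Xi_b$ and the fibers does not contribute spuriously.
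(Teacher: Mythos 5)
Your overall strategy is the same as the paper's: compute the two Mayer--Vietoris differentials using the blow-up formula for $H^\bullet(\Upsilon^\nu)$ and the K\"unneth decomposition $H^\bullet(Q_\Xi)\cong H^\bullet(\Xi)\oplus H^{\bullet-2}(\Xi)$ coming from topological triviality. However, your bookkeeping of the restriction maps contains a genuine error, and it is exactly the point on which the paper's proof turns. You assert that the blow-up summand $H^{m-2}(\Xi\cap\Xi_b)\subset H^m(\Upsilon^\nu)$ ``restricts trivially'' to the double locus, hence lies in $\Ker(d_1)$ and contributes directly to $Gr_m$, and likewise (in your $Gr_{m-1}$ computation) that the restriction of $H^{m-1}(\Upsilon^\nu)$ to each copy of $\Xi$ factors through $H^{m-1}(B)\lra H^{m-1}(\Xi)$. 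Both claims are false: the components $\Xi_{0\infty}$ and $\Xi_{\infty0}$ of the double locus are the proper transforms of $\Xi$ and $\Xi_b$, and each of these meets the exceptional divisor of $Bl_{\Xi\cap\Xi_b}B$ along a section over the center; consequently a class $\alpha\in H^{m-2}(\Xi\cap\Xi_b)$ restricts to the Gysin image $i_*\alpha\in H^m(\Xi)$, where $i:\Xi\cap\Xi_b\hookrightarrow\Xi$ is the divisorial inclusion. Already for $\alpha=1$ this restriction is the nonzero class $[\Xi\cap\Xi_b]\in H^2(\Xi)$. The correct description of the restriction map, which the paper states explicitly, is $(j^*,i_*)$, not $(j^*,0)$.

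This error changes the structure of the argument, not merely its wording. Since the blow-up summand is not in $\Ker(d_1)$ termwise, one cannot assemble $Gr_m$ as a direct sum of summands that individually survive. The paper's repair is the observation you are missing: the image of $H^m(\Upsilon^\nu)$ in $H^m(\Xi_{0\infty})\oplus H^m(\Xi_{\infty0})$ is \emph{contained in} the image of $H^m(Q_\Xi)$, which equals the diagonal. (For the pullback summand $H^m(B)$ this follows from translation-invariance of the cohomology of $B$, as you note; for the Gysin summand $i_*H^{m-2}(\Xi\cap\Xi_b)$ versus its translate supported on a translated intersection, it is a separate and more delicate check --- this, rather than the pullback summand, is where the translation subtlety you raise actually bites.) Granting the containment, every $x\in H^m(\Upsilon^\nu)$ can be corrected by some $y\in H^m(Q_\Xi)$ so that $(x,y)\in\Ker(d_1)$, which yields the exact sequence
\[
0\lra H^{m-2}(\Xi)\lra Gr_mH^m(\widetilde{\T}_0)\lra H^m(\Upsilon^\nu)\lra 0,
\]
and the isomorphism in the proposition is a \emph{noncanonical} splitting of this sequence; the same containment shows the full image of $d_1$ in degree $m-1$ is exactly the diagonal, so the cokernel is one copy of $H^{m-1}(\Xi)$. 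Your phrase ``the differences conspire so that exactly one copy survives'' gestures at this conclusion but supplies no argument for the containment, which is the actual content of the proof.
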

\begin{proof}By Corollary \ref{proptheta0}, $H^m(\Upsilon^\nu)\cong H^m(B)\oplus H^{m-2}(\Xi\cap\Xi_b)$ and the restriction map $H^m(\Upsilon^\nu)\ra H^m(\Xi_{0\infty})$ can be identified with the map
\[
H^m(B)\oplus H^{m-2}(\Xi\cap\Xi_b)\stackrel{(j^*, i_*)}\lra H^m(\Xi).
\]
Thus the image of 
\[
H^m(\Upsilon^\nu)\lra H^m(\Xi_{0\infty})\oplus H^m(\Xi_{\infty0})
\] 
 is contained in the image of 
\[
H^m(Q_\Xi)\cong H^m(\Xi)\oplus H^{m-2}(\Xi)\lra H^m(\Xi_{0\infty})\oplus H^m(\Xi_{\infty0}),
\]
which is equal to the diagonal of $H^m(\Xi_{0\infty})\oplus H^m(\Xi_{\infty0})$. Thus 
\[
Gr_{m-1}H^m(\widetilde{\T}_0)\cong H^{m-1}(\Xi).
\]
Next we compute $Gr_mH^m(\widetilde{\T}_0)\subset H^m(\Upsilon^\nu)\oplus H^m(Q_\Xi)$. By the previous discussion, for any $x\in  H^m(\Upsilon^\nu)$, we can find $y\in H^m(Q_\Xi)$ such that $(x,y)\in Gr_mH^m(\widetilde{\T}_0)$. Thus we have an exact sequence
\[
0\lra H^{m-2}(\Xi)\lra Gr_mH^m(\widetilde{\T}_0)\lra H^m(\Upsilon^\nu)\lra0
\]
and a noncanonical isomorphism
\[
Gr_mH^m(\widetilde{\T}_0)\cong H^{m-2}(\Xi)\oplus H^m(\Upsilon^\nu)\cong H^m(B)\oplus H^{m-2}(\Xi\cap\Xi_b)\oplus H^{m-2}(\Xi) 
\]
\end{proof}

\begin{corollary}\label{corGrthetat}The monodromy weight filtration on $H^m(\T_t)_{\lim}$ satisfies
\[
Gr_{m+1}H^m({\T}_t)_{\lim}\cong Gr_{m-1}H^m({\T}_t)_{\lim}\cong H^{m-1}(\Xi)
\]
and
\[
Gr_mH^m(\T_t)_{\lim}\cong H^m (B) \oplus H^{m-2}(\Xi\cap\Xi_b).
\]
\end{corollary}
\begin{proof} Analogous to the proof of Corollary \ref{corGrAt}.
\end{proof}

\subsection{The limit mixed Hodge structure on the primal cohomology}

The primal cohomology inherits a limit mixed Hodge structure from that of $H^4 (\T_t)$. We have

\begin{proposition}
Let $\bH\subset H^3(\Xi, \bZ)$ be the primal cohomology of $\Xi$ in $B$ and let $\bW \subset H^2 (\Xi\cap\Xi_b, \bZ)$ be the primal cohomology of $\Xi\cap\Xi_b$, i.e., $\bW := \Ker (j_*: H^2 (\Xi\cap\Xi_b, \bZ) \ra H^6 (B, \bZ))$. We have
\[
Gr_4 \bK_\bQ \cong \bW_\bQ, \quad \hbox{ and } \quad Gr_3 \bK_\bQ \cong \bH_\bQ.
\]
\end{proposition}
\begin{proof} Follows from the commutative diagram of Clemens-Schmid exact sequences
\begin{eqnarray}\label{eqnmixedK}
\xymatrix{\ar[r]&H^{4}(\widetilde{\T}_{V})\ar[r]^-{i_{t}^*}\ar[d]^-{j_*}&H^{4}(\T_{t})_{\lim}\ar[r]^-{N}\ar[d]^-{j_{t*}}&H^{4}(\T_{t})_{\lim}\ar[r]^-{\beta}\ar[d]^-{j_{t*}}&H_{6}(\widetilde{\T}_{V})\ar[d]^-{j_*}\ar[r]&\\
\ar[r]&H^{6}(\widetilde{A}_{V})\ar[r]&H^{6}(A_{t})_{\lim}\ar[r]&H^{6}(A_{t})_{\lim}\ar[r]&H_{6}(\widetilde{A}_{V})\ar[r]&}
\end{eqnarray}
and Corollaries \ref{corGrAt} and \ref{corGrthetat}.
\end{proof}

The action of the involution $y\mapsto b-y$ on $\Xi \cap \Xi_b$ splits $\bW_\bQ$ into the sum of its eigenspaces $\bW_\bQ^{+1}$ and $\bW_\bQ^{-1}$. By, e.g., \cite[pp. 3-4]{KramerWeissauer-theta} or \cite[pp. 7-8]{Kramer2013-fourfoldWeyl}, $\bW_\bQ^{+1}$ has dimension $52$ and Hodge numbers $w^{2,0} = w^{0,2} = 11, w^{1,1}= 30$, whereas $\bW_\bQ^{-1}$ consists of Hodge classes and has dimension $6$. Assuming $B$ sufficiently general, it follows from the main result of \cite{KramerWeissauer-theta} that the variations of Hodge structures formed by $\bW_\bQ^{+1}$ and $\bW_\bQ^{-1}$ as $b$ varies (see \cite{KramerWeissauer-theta}) are simple. We shall need

\begin{lemma}\label{lemW+simple}
For generic $B$ and $b\in B$, the Hodge structure $\bW_\bQ^{+1}$ is simple.
\end{lemma}
\begin{proof}
Let $B$ be generic and $U\subset B$ be the largest open subset on which the Hodge structures $\bW_\bQ^{+1}$ form a variation of Hodge structures. The simplicity of this variation of Hodge structures means that the corresponding representation $\widetilde{\pi}_1 (U) \ra GL (\bW_\bQ^{+1})$ of the algebraic monodromy group of $U$ is simple. Since the Mumford-Tate group $MT(\bW_\bQ^{+1})$ contains a finite index subgroup of the algebraic monodromy group of $U$ (see, e.g., \cite[Prop. 6]{Schnell-MumfordTate-2011}), it follows that, as representations of $MT(\bW_\bQ^{+1})$, we have $\bW_\bQ^{+1} \cong M \otimes \bW'$ where $\bW'$ is a simple representation of $MT(\bW_\bQ^{+1})$ and $M$ is its multiplicity space. This means that $\bW_\bQ^{+1} \cong M \otimes \bW'$ as Hodge structures, where $M$ is a trivial Hodge structure. Therefore the Hodge numbers of $\bW_\bQ^{+1}$ are multiples of those of $\bW'$ which is only possible if $\bW_\bQ^{+1} \cong \bW'$ is simple.
\end{proof}

\section{The action of $-Id$}\label{sec-Id}

\subsection{The dimensions of the eigenspaces} The action of $-Id$ leaves $\bK$ globally invariant and induces a splitting of $\bK_\bQ$
\[
\bK_\bQ = \bK_\bQ^{+1} \oplus \bK_\bQ^{-1}
\]
into the sum of its eigenspaces for $+1$ and $-1$ which are Hodge substructures of $\bK_\bQ$. We have

\begin{lemma}
The subspace $\bK^{g-2,1}$ is contained in $\bK_\bC^{(-1)^g}$. The eigenspace $(\bK^{g-3,2})^{(-1)^{g}} = \bK^{g-3,2} \cap \bK_\bC^{(-1)^{g}}$ has dimension $\frac{3^g+1}{2} - 2^g - {g \choose 2}^2 + g{g\choose 3} = \frac{3^g+1}{2} - 2^g - \frac{1}{6}{g^2 \choose 2}$.
\end{lemma}
\begin{proof}
By \cite[pp. 4-5]{IzadiWang2015}, we have the natural exact sequence
\[
0 \lra H^1 (\Omega^{g-1}_A)_{prim} \stackrel{m}{\lra} H^0 (\T, \omega_A |_\T (2\T)) \lra \bK^{g-2, 1} \lra 0
\]
where $H^1 (\Omega^{g-1}_A)_{prim} = \Ker (H^1 (\Omega^{g-1}_A) \ra H^2 (\omega_A))$ is the primitive cohomology. 
The automorphism $-Id$ of $A$ acts as the identity on $H^0 (\T, \cO_\T (2\T))$ and as $(-1)^g$ on $H^1 (\Omega^{g-1}_A)$ and on sections of $\omega_A$. Hence the action of $-Id$ on $\bK^{g-2,1}$ is via multiplication by $(-1)^g$.

By \cite[pp. 4-5]{IzadiWang2015}, we also have the exact sequences
\begin{eqnarray}\label{eqnHodge}
0\lra H^2(\Omega^{g-2}_A)_{prim}\lra \frac{F^{g-2}H^g(U)}{F^{g-1}H^g(U)}\lra \bK^{g-3,2}\lra0\\
\label{eqnHodge1} H^0(\T,\cO_\T(\T))\otimes H^0(\T,\omega_A |_\T(2\T)) \stackrel{m}{\lra} H^0(\T,\omega_A |_\T(3\T)) \lra \frac{F^{g-2}H^g(U)}{F^{g-1}H^g(U)}\lra0,
\end{eqnarray}
where $U = A\setminus \T$ and $m$ is again multiplication of sections. As before $-Id$ acts as multiplication by $(-1)^g$ on $H^2(\Omega^{g-2}_A)$ and $H^0(\T,\omega_A |_\T(2\T))$, and as the identity on $H^0(\T,\cO_\T(\T))$.

We claim that the eigenspaces of $-Id$ for $+1$ and $-1$ in $H^0(A,\cO_A(3\T))$ have dimensions $\frac{3^g +1}{2}$ and $\frac{3^g-1}{2}$ respectively. This can be checked, for instance, by induction on $g$ since these dimensions are constant on $\cA_g$. The claim is clearly true for $g=1$. Assuming it holds for $g-1$, one checks it for the product $E\times B$ of an elliptic curve and a ppav of dimension $g-1$:
\[
H^0 (\cO_{E\times B} (3\T)) = H^0 (\cO_E (3p))\otimes H^0 (\cO_B (3\Xi))
\]
where $p$ is a point of order 2 of $E$ and $\Xi$ is a symmetric theta divisor of $B$. The claim can now be checked with a simple calculation.

Since $H^0 (\cO_\T (3\T)) = H^0 (\cO_A (3\T)) / \theta \cdot H^0 (\cO_A (2\T))$ where $\theta$ is a nonzero section of $H^0 (\cO_A (\T))$, we see that the eigenspaces of $-Id$ for $+1$ and $-1$ in $H^0(\cO_\T(3\T))$ have dimensions $\frac{3^g +1}{2} - 2^g$ and $\frac{3^g-1}{2}$ respectively.

Now since the image of $m$ is contained in the eigenspace for $(-1)^{g-1}$, it follows from sequence \eqref{eqnHodge1} that the dimension of the eigenspace of $ \frac{F^{g-2}H^g(U)}{F^{g-1}H^g(U)}$ for the eigenvalue $(-1)^{g}$ is $\frac{3^g+1}{2} - 2^g$. Finally, since the action of $-Id$ on $H^2(\Omega^{g-2}_A)_{prim}$ is multiplication by $(-1)^g$, it follows from sequence \eqref{eqnHodge} that $(\bK^{g-3,2})^{(-1)^{g}}$ has dimension $\frac{3^g+1}{2} - 2^g - \dim H^2(\Omega^{g-2}_A)_{prim} = \frac{3^g+1}{2} - 2^g - {g \choose 2}^2 + g{g\choose 3}$.
\end{proof}

\begin{corollary}\label{corg-5}
The Hodge structure $\bK^{(-1)^{g-1}}$ has level $g-5$.
\end{corollary}

In particular, when $g=5$, we compute $\dim \bK_\bQ^{+1} =6$ and $\bK_\bQ^{+1}$ consists of Hodge classes, $\dim \bK_\bQ^{-1} = 72, \dim (\bK^{2,2})^{-1} = 40$.

\subsection{Action of the local monodromy} Since the limits of Hodge classes give Hodge classes in the limit mixed Hodge structures in sequence \ref{eqnmixedK}, we deduce that $N (\bK_\bQ^{+1}) = 0$ because $\bK_\bQ^{+1}$ consists of Hodge classes and $\bH$ is a simple Hodge structure by the main result of \cite{IzadiVanStraten}. In particular, $Gr_3 (\bK_\bQ^{+1}) =0$ and $Gr_3 (\bK_\bQ^{-1}) = Gr_3 (\bK_\bQ) = \bH_\bQ$.
Projecting further to $Gr_4 \bK_\bQ$ and then to $\bW_\bQ^{+1}$, it follows from Lemma \ref{lemW+simple} that $\bK_\bQ^{+1}$ maps isomorphically to $\bW_\bQ^{-1}$ and also that $Gr_4 \bK_\bQ^{-1}$ maps isomorphically to $\bW_\bQ^{+1}$.

\section{Global monodromy}\label{secglobalmono}

\subsection{The vanishing cocycles near the boundary}\label{subsecvan}
Let $Z\stackrel{\iota}\ra|H|\cong\bP^5$ be the $2$-to-$1$ cover ramified exactly along $\Gamma:=\cD+\overline{\cP_H^{-1}(N_0)}$ and set $X:=\iota^{-1}l$, $\cV:=Z\setminus\G$. Note that $Z$ exists since $\Gamma$ has even degree by Proposition \ref{propsingular} and Corollary \ref{corN0}, hence the line bundle $\cO_{\bP^5} (\Gamma)$ has a square root and the standard construction of ramified double covers applies. The curve $X$  is a 2-to-1 cover of $l$ ramified along $X\cap\G$. After making a base change to $X$ and blowing up the singular locus of each singular theta divisor, we obtain a family $(\widetilde{A},\widetilde{\T})$ with general fiber $(A_t,\T_t)$:
\[
\xymatrix{\T_t\ar[r]^-{i_t}\ar[d]_-{j_t}&\widetilde{\T}\ar[d]^-j\\
A_t\ar[d]\ar[r]^-{h_t}&\widetilde{A}\ar[d]^-{p}\\
\{t\}\ar[r]&X.}
\]
The total spaces of $\widetilde{A}$ and $\widetilde{\T}$ are smooth and the local pictures are described in Sections \ref{subsecnull}, \ref{subsectwonodes} and \ref{subsecbdy}.

For each $s_i$, $i=1,...,42$, corresponding to the degeneration in Section \ref{secpencil} (also see Section \ref{subsecbdy}), choose a small disk $V_i\ni s_i$ and pick a general point $t_i\in V_i$. Let $\gamma_i\subset X\cap\cV$ be a general path connecting $t$ with $t_i$. The family $\widetilde{\T}|_{\cup\gamma_i}$ deformation retracts to $\T_t$. Thus we have induced {\bf diffeomorphisms} 
\[
\psi_i: \T_{t}\lra \T_{t_i}.
\]
Over each $V_i$ we have the Clemens-Schmid exact sequences (\ref{eqCS}) for the degenerations of the abelian varieties and their theta divisors 
\begin{eqnarray}\label{eqnCS2}
\xymatrix{\ar[r]&H^{m}(\widetilde{\T}_{V_i})\ar[r]^-{i_{t_i}^*}\ar[d]^-{j_*}&H^{m}(\T_{t_i})_{\lim}\ar[r]^-{N_i}\ar[d]^-{j_{t_i*}}&H^{m}(\T_{t_i})_{\lim}\ar[r]^-{\beta_i}\ar[d]^-{j_{t_i*}}&H_{10-m}(\widetilde{\T}_{V_i})\ar[d]^-{j_*}\ar[r]&\\
\ar[r]&H^{m+2}(\widetilde{A}_{V_i})\ar[r]&H^{m+2}(A_{t_i})_{\lim}\ar[r]&H^{m+2}(A_{t_i})_{\lim}\ar[r]&H_{10-m}(\widetilde{A}_{V_i})\ar[r]&.}
\end{eqnarray}
Here $j_*:H^m(\widetilde{\T}_{V_i})\ra H^{m+2}(\widetilde{A}_{V_i})$ is defined to be the transpose of $j^*:H^{10-m}_c(\widetilde{A}_{V_i})\ra H^{10-m}_c(\widetilde{\T}_{V_i})$ under Poincar\'e duality and is a morphism of mixed Hodge structures \cite[Section 8]{IzadiTamasWang}.

Put $\mathbb{V}_i^m:=\psi^*_i\Ker\beta_i=\psi_i^*\Img(N_i)=\psi^*_iGr_{m-1}H^m(\T_{t_i})_{\lim}\subset H^m(\T_t)_{\lim}$. 

\begin{proposition}The space $\mathbb{V}_i$ is the space of `local vanishing $m$-cocycles', i.e., cohomology classes whose Poincar\'e dual vanishes in $\tT_{V_i}$. 
\end{proposition}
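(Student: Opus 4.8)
The plan is to prove the proposition by unwinding the definition of the boundary map $\beta_i$ in the Clemens--Schmid sequence \eqref{eqnCS2} and comparing it directly with the geometric meaning of a vanishing cocycle. By \eqref{eqnbeta}, the map $\beta_i$ is by definition the composite
\[
H^m(\T_{t_i})\stackrel{\text{PD}}{\lra}H_{10-m}(\T_{t_i})\stackrel{i_{t_i*}}{\lra}H_{10-m}(\widetilde{\T}_{V_i}),
\]
so that for $\alpha\in H^m(\T_{t_i})_{\lim}$ one has $\beta_i(\alpha)=i_{t_i*}(\text{PD}(\alpha))$: it sends $\alpha$ to the image, in the homology of the total space $\widetilde{\T}_{V_i}$, of the cycle Poincar\'e dual to $\alpha$ on the fiber $\T_{t_i}$.

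First I would read off $\Ker\beta_i$ from this description. A class $\alpha$ lies in $\Ker\beta_i$ precisely when the cycle $\text{PD}(\alpha)$ becomes homologically trivial after being pushed forward into $\widetilde{\T}_{V_i}$ via $i_{t_i*}$, which is exactly the condition that $\alpha$ be a vanishing cocycle for the degeneration over $V_i$. Transporting along the diffeomorphism $\psi_i:\T_t\ra\T_{t_i}$ arising from the deformation retraction of $\widetilde{\T}|_{\cup\gamma_i}$ onto $\T_t$, the subspace $\mathbb{V}_i^m=\psi_i^*\Ker\beta_i\subset H^m(\T_t)_{\lim}$ is then identified with the space of local vanishing $m$-cocycles at $s_i$, as claimed. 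The further equalities $\Ker\beta_i=\Img(N_i)=Gr_{m-1}H^m(\T_{t_i})_{\lim}$ recorded in the definition of $\mathbb{V}_i^m$ are automatic from exactness of \eqref{eqnCS2} together with $N_i^2=0$, and serve here only as a consistency check.

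The one point that genuinely requires care, and which I expect to be the main (if still routine) obstacle, is verifying that the homological condition $i_{t_i*}(\text{PD}(\alpha))=0$ really is the intrinsic notion of a local vanishing cocycle rather than a weaker condition. For the nodal central fibers of Sections \ref{subsecnull} and \ref{subsectwonodes} this is the content of the classical Picard--Lefschetz picture: the vanishing sphere bounds a thimble inside the total space, so its Poincar\'e dual lies in $\Ker(i_{t_i*})$, and conversely the vanishing classes exhaust this kernel. For the boundary degeneration of Section \ref{subsecbdy} the same interpretation is available once one invokes the smoothness of the total space $\widetilde{\T}$ established in Proposition \ref{proptotsmooth}, which guarantees that $i_{t_i*}$ is the honest topological Gysin map and that Poincar\'e duality on the fiber is compatible with that on $\widetilde{\T}_{V_i}$. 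Granting this, the proposition follows immediately from the definition of $\beta_i$.
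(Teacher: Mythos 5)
Your proof is correct and follows essentially the same route as the paper, whose entire argument is that the claim is immediate from the definition of $\beta_i$ in \eqref{eqnbeta}: a class lies in $\Ker\beta_i$ exactly when its Poincar\'e dual cycle dies under $i_{t_i*}$ in the homology of the total space over $V_i$. Your final paragraph worrying about whether this kernel condition captures the ``intrinsic'' notion of vanishing cocycle is unnecessary, since the proposition itself takes that homological condition as the definition of a local vanishing $m$-cocycle.
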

\begin{proof}This follows immediately from the definition of  $\beta_i$ in (\ref{eqnbeta}).
\end{proof}


By Corollary \ref{corGrthetat}, we have
\[
\Img(N_i)= Gr_{m-1}H^m(\T_{t_i})_{\lim}\stackrel{i_{t_i}^*}\cong Gr_{m-1}H^m(\widetilde{\T}_{V_i})\cong H^{m-1}(\Xi_i).
\]
When $m=4$, we can further rewrite the above isomorphisms as
\begin{eqnarray}\label{eqndecomp}
Gr_3H^4(\T_{t_i})_{\lim}\cong H^3(\Xi_i)\cong H^3(B_i)\oplus \bH'_i\cong j_{t_i}^*Gr_3H^4(A_{t_i})_{\lim}\oplus\bH'_i,
\end{eqnarray}
where $\bH'_i\subset H^3(\Xi_i)$ is the primal cohomology of $\Xi_i$ in $B_i$, which is $10$-dimensional. Let $\bH_i\subset\mathbb{V}_i^4\subset H^4(\T_t)$ be the image of $\bH_i'$ under the composition
\[
H^3(B_i)\oplus\bH'_i\cong Gr_3H^4(\T_{t_i})_{\lim}\subset H^4(\T_{t_i})_{\lim}\stackrel{\psi^*_{i}}\lra H^4(\T_t).
\]

Let $H^m(\T_t)_{var}:=\Ker(i_{t*}:H^m(\T_t)\ra H^{m+2}(\widetilde{\T}))$ and $H^m(A_t)_{var}:=\Ker(h_{t*}:H^m(A_t)\ra H^{m+2}(\widetilde{A}))$ be the variable cohomology of $\T_t$ in $\widetilde{\T}$ and $A_t$ in $\widetilde{A}$, respectively.

\begin{proposition}\label{propgenerate}The variable cohomology $H^m(\T_t)_{var}$ is equal to $\sum_{i=1}^{42}\mathbb{V}_i^m$.
\end{proposition}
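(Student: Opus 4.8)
The plan is to prove the two inclusions separately; the inclusion $\sum_{i=1}^{42}\mathbb{V}_i^m\subseteq H^m(\T_t)_{var}$ is immediate. Indeed, by the very definition of $\beta_i$ each $\mathbb{V}_i^m=\psi_i^*\Ker\beta_i$ consists of classes whose Poincar\'e dual already bounds in the local total space $\widetilde{\T}_{V_i}$; since $\widetilde{\T}_{V_i}\subset\widetilde{\T}$, such a class is killed by $i_{t*}$, and summing over $i$ gives the claim.

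For the reverse inclusion I would invoke the global invariant cycle theorem (see, e.g., \cite{Voisin03}). Let $X^\circ\subset X$ be the complement of the finitely many points over which $\widetilde{\T}\to X$ fails to be smooth. Since $\widetilde{\T}$ is smooth projective (Proposition \ref{proptotsmooth}) and $i_{t*}$ is the Poincar\'e adjoint of $i_t^*$, the image of $i_t^*$ on $H^\bullet(\T_t)$ is the monodromy invariant subspace $H^\bullet(\T_t)^{\pi_1(X^\circ)}$; hence $H^m(\T_t)_{var}=\Ker(i_{t*})$ is the orthogonal complement of these invariants, equivalently the span of all displacements $(\rho(\sigma)-\mathrm{Id})\,\alpha$ with $\sigma\in\pi_1(X^\circ)$. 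It then suffices to verify, for $\sigma$ ranging over a set of generators of $\pi_1(X^\circ)$, that $\Img(\rho(\sigma)-\mathrm{Id})\subseteq\sum_i\mathbb{V}_i^m$. For a loop around one of the $42$ boundary points $\rho(\sigma)=\mathrm{Id}+N_i$, so the displacement space is exactly $\mathbb{V}_i^m$; for a loop around one of the points over $N_0$ the local monodromy is trivial after the base change (Proposition \ref{propmono} and the Picard--Lefschetz relation $T_U^2=\mathrm{Id}$ of \S\ref{subsecnull}), so its displacement vanishes.

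The remaining generators are those coming from the positive genus of $X$, and this is where the difficulty concentrates. As the whole family is pulled back along the double cover $X\to l\cong\bP^1$, the representation $\rho$ factors through $\pi_1(l^*)$ with $l^*:=l\setminus\Gamma$; the latter monodromy is generated by the local operators $S_k$ at the $342$ punctures, and each genus generator of $X$ maps to a product $S_jS_k$. For $m\neq4$ every $S_k$ lying over $N_0$ acts trivially on $H^m(\T_t)$, since a node affects only the middle cohomology; thus each product reduces to boundary operators, its displacement lands in $\sum_i\mathbb{V}_i^m$, and the proposition follows. For $m=4$, however, the operators over $\theta_{null}$ and $N_0'$ are the nontrivial reflections in the vanishing classes $\gamma_k$, and telescoping $(S_jS_k-\mathrm{Id})=S_j(S_k-\mathrm{Id})+(S_k-\mathrm{Id})$ identifies $H^4(\T_t)_{var}$ with the \emph{full} vanishing cohomology of the pencil, a priori spanned by $\sum_i\mathbb{V}_i^4$ together with the lines $\bQ\gamma_k$. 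The crux is thus to show each $\gamma_k\in\sum_i\mathbb{V}_i^4$. I would argue that $\gamma_k$, being supported near a node of $\T_0$ at a smooth point of the abelian fibre $A_t$, bounds a real $5$--ball in $A_t$, so that $\gamma_k\in\bK_t=\Ker(j_{t*})$; it then remains to prove that the primal summands $\bH_i\subset\mathbb{V}_i^4$ of the $42$ boundary degenerations (see \eqref{eqndecomp}) span $\bK_t$, whence $\gamma_k\in\sum_i\bH_i\subseteq\sum_i\mathbb{V}_i^4$. Establishing this spanning, by playing the Clemens--Schmid description $\Img N_i\cong H^3(\Xi)$ against the pencil monodromy on the primal part, is the step I expect to demand the real work.
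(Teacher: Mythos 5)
Your easy inclusion and your reduction of the hard inclusion to generators are fine in outline, but the argument collapses exactly where you say the ``real work'' lies, and the way you propose to do that work is circular relative to the paper's logic. For $m=4$ you reduce everything to showing $\gamma_k\in\sum_i\mathbb{V}_i^4$ for each nodal vanishing class, and you plan to get this from the two facts $\gamma_k\in\bK_t$ (fine: the vanishing sphere lies in a contractible neighborhood of the node, hence bounds in $A_t$) and $\bK_t=\sum_i\bH_i$. But the second fact is precisely Proposition \ref{propprim}, and the paper proves Proposition \ref{propprim} \emph{from} Proposition \ref{propgenerate}: given $\alpha\in\bK_t\subset H^4(\T_t)_{var}$, one uses Proposition \ref{propgenerate} together with \eqref{eqndecomp} to write $\alpha=\sum_i(x_i+y_i)$ with $x_i\in j_t^*H^4(A_t)$, $y_i\in\bH_i$, and then invokes $H^4(\T_t)=j_t^*H^4(A_t)\oplus\bK_t$. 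So the spanning statement you defer is not an independent lemma but a downstream consequence of the very proposition you are proving; unless you produce a proof of $\bK_t=\sum_i\bH_i$ that bypasses Proposition \ref{propgenerate}, your proposal does not close.

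The paper's own proof takes a different route on which the $\gamma_k$ never appear: it works entirely over $X$, where by \eqref{eqnLP} and Proposition \ref{propmono} the local monodromy at \emph{every} fiber with one or two nodes is trivial after the order-$2$ base change, so Clemens--Schmid gives $\Ker\beta=\Img N=0$ there (no local vanishing cocycles at those fibers), and the variable cohomology is generated by the local vanishing cocycles at the $42$ boundary fibers alone. To your credit, your analysis makes visible what that short argument leaves implicit: over a base of positive genus, ``variable cohomology equals the sum of local vanishing cocycles'' is not a formal fact, and the genus generators of $\pi_1$ of the punctured $X$ --- exactly your even words $S_jS_k$ --- do contribute displacements, which is why the $\gamma_k$ force themselves into the picture. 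Two further points in your write-up need repair even for $m\neq 4$: the containment $\Img(S_i-\mathrm{Id})\subseteq\mathbb{V}_i^m$ for a boundary \emph{half}-loop $S_i$ does not follow from $S_i^2=\exp(N_i)$ alone, since a priori $S_i$ could act as $-1$ on a subspace killed by $N_i$; you need unipotency of the monodromy of $\T\ra l$ at $s_i$ (true, because there the central fiber is reduced with normal crossings and the total space is smooth, but it must be stated and used). Also the telescoping identity should read $S_jS_k-\mathrm{Id}=S_j(S_k-\mathrm{Id})+(S_j-\mathrm{Id})$, not $+(S_k-\mathrm{Id})$.
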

\begin{proof} By  Equations \eqref{eqnLP} and \eqref{eqnmono}, when the theta divisor has one or two nodes, the local monodromy representation is trivial after we make a base change of order $2$. Thus from the Clemens-Schmid sequence, there are no `local vanishing cocycles' near these singular theta divisors. Therefore, by the global invariant cycles theorem, and since $\pi_1 (X \cap \cV)$ is generated by loops around the points of $X \setminus (X\cap \cV)$, the variable cohomology is generated by the `local vanishing cocyles' near $\T_{s_i}$, $i=1,...,42$. 
\end{proof}

\subsection{The proof of the main theorem} From now on we will abuse notation by considering $N_i$ in diagram \eqref{eqnCS2} as an endomorphism on $H^4(\T_t)$ via $\psi_i^*$ and then restricting it to $\bK_t^{-1}$. With the new notation, $N_i:\bK_t^{-1}\ra \bK_t^{-1}$ satisfies 
\begin{eqnarray}
&& N_i^2=0,\\
&& N_i(\bK_t^{-1})=\bH_i.
\end{eqnarray} 
Each $N_i$ induces a `limit mixed Hodge structure' $\bK^{i,-1}_{\lim}$ on $\bK_t^{-1}$ as in Section \ref{subseclim}.
We shall prove below that $\cap_i \Ker (N_i) = 0$ and deduce from it that $\sum \bH_i = \bK_t^{-1}$. We first need

\begin{lemma}\label{lemZari}Notation as in Section \ref{subsecvan}, the inclusion map induces a surjective map on fundamental groups
\[
\pi_1(X\cap\cV)\lra\pi_1(\cV).
\]
\end{lemma}
\begin{proof} By the construction of $Z$, the map $\iota:\cV\ra |H|\setminus\G$ is an \'etale  double cover. Thus we have the commutative diagram of exact sequences
\[
\xymatrix{1\ar[r]&\pi_1(X\cap\cV)\ar[r]^-{\iota_*}\ar[d]&\pi_1(l\setminus\G)\ar[d]\ar[r]&\bZ_2\ar[d]^-{\cong}\ar[r]&0\\
1\ar[r]&\pi_1(\cV)\ar[r]^-{\iota_*}&\pi_1(|H|\setminus\G)\ar[r]&\bZ_2\ar[r]&0.}
\]
By Zariski's theorem \cite[7.4.1]{Lamotke-Lefschetz-1981} the natural map
\[\pi_1(l\setminus\G)\lra \pi_1(|H|\setminus\G)
\]
is surjective if $l$ meets $\G$ transversely. The statement now follows from the diagram above.
\end{proof}
Denote $\bW_\bQ^{i,+1} := Gr_4 \bK^{i,-1}_{\lim}$ as in Lemma \ref{lemW+simple}. We can now prove
\begin{lemma}\label{lemker}We have $\cap_{i=1}^{42}\Ker (N_i)=0$.
\end{lemma}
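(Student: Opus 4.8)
The plan is to show that the common kernel $\cap_{i=1}^{42}\Ker(N_i)$ is a monodromy-invariant subspace of $\bK_t$, and then to argue that no proper nonzero such subspace can exist. First I would recall that each $N_i = \log T_i$, where $T_i$ is the local monodromy operator around the singular fiber $\T_{s_i}$, and that by Proposition \ref{propprim} we have $\bK_t = \sum_{i=1}^{42}\bH_i = \sum_{i=1}^{42} N_i(\bK_t)$. The key structural observation is that $\Ker(N_i) = \Img(N_i)^{\perp}$ with respect to the polarization $\langle,\rangle$: indeed, equation \eqref{eqnN} says each $N_i$ is skew-adjoint, so $\langle N_i x, y\rangle = -\langle x, N_i y\rangle$, and since $N_i^2 = 0$ we get $\Img(N_i)\subseteq\Ker(N_i)$, with the orthogonality $\Ker(N_i) = \Img(N_i)^\perp = \bH_i^\perp$ following from non-degeneracy of the pairing together with a dimension count ($\dim\Ker N_i + \dim\Img N_i = \dim\bK_t$).

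Granting that, I would dualize: an element $\alpha\in\cap_i\Ker(N_i)$ lies in $\bH_i^\perp$ for every $i$, hence $\alpha\in\left(\sum_i \bH_i\right)^\perp = \bK_t^\perp$. The plan then is to invoke that the polarization $\langle,\rangle$ restricted to $\bK_t$ is non-degenerate — this is exactly the content of $\bK_t$ being a polarized Hodge substructure, with the pairing induced from the intersection form on $H^4(\T_t)$ via the orthogonal splitting $H^4(\T_t) = j_t^* H^4(A_t)\oplus\bK_t$ used in the proof of Proposition \ref{propprim}. Non-degeneracy of $\langle,\rangle|_{\bK_t}$ forces $\bK_t^\perp\cap\bK_t = 0$, and therefore $\alpha = 0$.

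Concretely, I would structure the argument as follows. Let $\alpha\in\cap_{i=1}^{42}\Ker(N_i)$. For each $i$ and each $x\in\bK_t$, skew-adjointness gives $\langle \alpha, N_i x\rangle = -\langle N_i\alpha, x\rangle = 0$, so $\alpha$ is orthogonal to $N_i(\bK_t) = \bH_i$. Since this holds for every $i$, $\alpha$ is orthogonal to $\sum_{i=1}^{42}\bH_i$, which by Proposition \ref{propprim} is all of $\bK_t$. In particular $\langle\alpha,\alpha\rangle = 0$ and more strongly $\alpha$ pairs trivially with every element of $\bK_t$. Because the intersection pairing polarizes the Hodge structure $\bK_t$, it is non-degenerate on $\bK_t$, and hence $\alpha = 0$.

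The main obstacle, and the point I would want to justify carefully, is precisely the non-degeneracy of $\langle,\rangle$ on $\bK_t$ — equivalently that the skew-adjointness relation \eqref{eqnN} together with $\bK_t = \sum_i\bH_i$ genuinely yields $\alpha\perp\bK_t\Rightarrow\alpha=0$. This is where one must use that $\bK_t$ is a \emph{polarized} Hodge structure rather than merely a subspace: the intersection form on $H^4(\T_t)$ is non-degenerate by Poincaré duality, the decomposition $H^4(\T_t)=j_t^*H^4(A_t)\oplus\bK_t$ is orthogonal, and $\bK_t$ being a primal (hence primitive-type) piece inherits a non-degenerate polarization. I expect the short computation $\langle\alpha,N_i x\rangle = -\langle N_i\alpha,x\rangle$ to be entirely routine once \eqref{eqnN} is in hand; the conceptual weight rests on invoking the polarization correctly, so I would state explicitly where the non-degeneracy comes from rather than treating it as automatic.
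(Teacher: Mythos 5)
Your proof is correct and is essentially identical to the paper's: both use the skew-adjointness relation \eqref{eqnN} to show that any element of $\cap_i\Ker(N_i)$ is orthogonal to every $\bH_i=\Img(N_i)$, hence to $\sum_i\bH_i=\bK_t$ by Proposition \ref{propprim}, and then conclude by non-degeneracy of the pairing on $\bK_t$. Your extra care in justifying that non-degeneracy (via the orthogonal splitting $H^4(\T_t)=j_t^*H^4(A_t)\oplus\bK_t$ and the fact that $\langle,\rangle$ is a polarization on $\bK_t$) is a reasonable elaboration of what the paper states tersely as ``the fact that the intersection product is nondegenerate.''
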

\begin{proof}
Consider the family of theta divisors over $\cV$:
\[
\T_\cV \lra \cV.
\]
By the global invariant cycles theorem \cite[4.1]{deligne721}, for any smooth compactification $\overline{\T}$ of $\T_\cV$, the image of $H^4 (\oT)$ in $H^4 (\T_t)$ is the space of classes fixed by the monodromy action of $\pi_1 (\cV)$. Since, by Lemma \ref{lemZari}, the map $\pi_1 (X\cap\cV) \ra \pi_1 (\cV)$ is surjective, this is also equal to the space of classes fixed by the monodromy action of $\pi_1 (X\cap \cV)$ which is, again by the global invariant cycles theorem, also equal to the image of $H^4 (\tT)$ in $H^4 (\T_t)$. For all $i$, the map $H^4 (\tT) \ra H^4 (\T_t)$ factors through $H^4 (\tT_{V_i}) = H^4 (\T_{s_i})$. By the functoriality of the mixed Hodge structures on the cohomology of algebraic varieties, the image of the pull-back map $H^4 (\tT) \ra H^4 (\T_{s_i})$ maps injectively to $Gr_4 H^4 (\T_{s_i})$. Hence, by sequence \eqref{seqGR4}, the image of $H^4 (\tT) \ra H^4 (\T_t) = H^4 (\T_{t_i})$ maps injectively to $Gr_4 H_{lim}^4 (\T_{t_i})$.

The space $\cap_{i=1}^{42}\Ker (N_i)$ is the intersection of the image of $H^4 (\oT)$ or $H^4 (\tT)$ in $H^4 (\T_t)$ with $\bK_t^{-1}$. It therefore maps injectively to $Gr_4 \bK_t^{i,-1} = \bW^{i,+1}_\bQ$ for all $i$. Since $\bW^{i,+1}_\bQ$ is simple by Lemma \ref{lemW+simple}, it follows that either $\cap_{i=1}^{42}\Ker (N_i) =0$ or $\cap_{i=1}^{42}\Ker (N_i) = \bW^{i,+1}_\bQ$. To exclude the latter, move the pencil $l$ so that the isomorphism class of $\bW^{i,+1}_\bQ$ (as a Hodge structure) changes. This is possible because this Hodge structure does go to the boundary of its period domain when $\Xi \cap \Xi_b$ becomes singular.
\end{proof}

\begin{corollary}\label{corprim}
We have
\[
\sum \bH_i = \bK_t^{-1}.
\]
\end{corollary}
\begin{proof}
Since the monodromy operator preserves the intersection product $\langle,\rangle$ on $\bK_t^{-1}$, $N_i$ also satisfies the equality
\begin{eqnarray}\label{eqnN}
&&\langle N_i(x),y\rangle+\langle x,N_i(y)\rangle=0
\end{eqnarray} 
for any $x,y\in \bK_t^{-1}$.

Since the intersection pairing is nondegenerate on $\bK_t^{-1}$, equation (\ref{eqnN}) implies that $\Ker (N_i)^\perp = \bH_i$, $i=1,...,42$. Hence $\sum \bH_i = (\cap_i \Ker (N_i))^\perp = \bK_t^{-1}$.
\end{proof}

\begin{lemma}\label{lemconjugate}
With the notation of Section \ref{subsecvan}, all $\bH_i, i=1,...,42$ are conjugate under the monodromy representation
\[ \rho:\pi_1(X\cap\cV, t)\lra \mathrm{Aut}(\bK_t^{-1},\langle,\rangle).
\]
\end{lemma}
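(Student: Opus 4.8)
The plan is to reduce the statement to the classical fact that the meridian loops around an irreducible hypersurface are mutually conjugate in the fundamental group of the complement, applied here to the dual variety $\cD$, and then to convert conjugacy of loops into conjugacy of the subspaces $\bH_i$ via the already-established identity $\bH_i=N_i(\bK_t)$. The geometric input is that the $42$ points $s_i$ are exactly $l\cap\cD$, where $\cD\subset|H|$ is the dual variety of $T\hookrightarrow|H|^*$; thus the $s_i$ are the members of the pencil with a single node, lying in the smooth locus of $\cD$ and off the second component $\overline{\cP_H^{-1}(N_0)}$ of $\Gamma$. Since $T$ is irreducible and nondegenerate, $\cD$ is an irreducible hypersurface. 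Passing to the double cover $\iota\colon Z\ra|H|$, the reduced preimage $\tcD:=\iota^{-1}(\cD)_{\mathrm{red}}$ is the ramification divisor over $\cD$, hence isomorphic to $\cD$ and again irreducible; the lifts $\ts_i\in\tcD$ of the $s_i$ are smooth points of $\tcD$, lying in the smooth locus of $Z$ and away from the remaining boundary $\iota^{-1}(\overline{\cP_H^{-1}(N_0)})$.

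Next I would invoke the topological mechanism. For the irreducible hypersurface $\tcD$ in the smooth variety $Z$, any two meridians around $\tcD$ at smooth points --- small positively oriented loops linking $\tcD$, transported back to the base point $t$ along paths --- are conjugate in $\pi_1(\cV,t)$, where $\cV=Z\setminus\tGa$ and $\tGa=\iota^{-1}(\Gamma)$. The conjugating elements arise from sliding one meridian to the other along a path in the smooth locus of $\tcD$ with the remaining components of $\tGa$ deleted; this locus is connected, being obtained from the irreducible $\tcD$ by removing a proper closed subset. Writing $\tgamma_i\in\pi_1(\cV,t)$ for the meridian around $\ts_i$, this produces, for each pair $i,j$, an element $h\in\pi_1(\cV,t)$ with $h\,\tgamma_i\,h^{-1}=\tgamma_j$.

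Finally I would translate this into the claim. By the construction of $N_i$ through the Clemens--Schmid sequence over a disk about $\ts_i$, the monodromy $\rho(\tgamma_i)$ equals, after the diffeomorphism $\psi_i^*$ and restriction to $\bK_t$, the unipotent operator $\exp(N_i)$; since $N_i^2=0$ we have $N_i=\log\rho(\tgamma_i)|_{\bK_t}$ and $N_i(\bK_t)=\bH_i$. Applying $\rho$ to $h\,\tgamma_i\,h^{-1}=\tgamma_j$ gives $\rho(h)\exp(N_i)\rho(h)^{-1}=\exp(N_j)$, hence $\rho(h)N_i\rho(h)^{-1}=N_j$; taking images and using that $\rho(h)$ is an automorphism of $\bK_t$ yields $\rho(h)(\bH_i)=\bH_j$. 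Thus the $\bH_i$ all lie in a single $\rho(\pi_1(\cV,t))$-orbit.

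The main obstacle I anticipate is the precise matching in the last paragraph: one must track carefully how the meridian $\tgamma_i$ of the double cover $\cV$ induces, via the ramified base change and the transport $\psi_i^*$ from $\T_{\ts_i}$ to $\T_t$, exactly the operator whose logarithm has image $\bH_i$ (in particular that no spurious factor intervenes), and one must check that the path realizing the conjugacy stays inside $\cV$, meeting $\tGa$ only in the single simple crossing of $\tcD$. The geometric core --- irreducibility of $\cD$ and hence connectivity of its smooth locus --- is standard; the real work is the compatibility of the classical meridian-conjugacy with the Clemens--Schmid description $\bH_i=N_i(\bK_t)$.
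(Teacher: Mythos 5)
Your proposal is correct, and while it rests on the same geometric core as the paper's proof --- the irreducibility of the dual variety $\cD$, hence the connectedness of its smooth locus after deleting the bad loci --- the mechanism by which you convert that geometry into the statement about the $\bH_i$ is genuinely different and, at the delicate step, tighter. The paper argues directly with the subspaces: it connects $t_i$ to $t_j$ by a path $\delta$ in $\cV$ chosen as a section of the punctured tubular neighborhood over a path in the smooth locus $\cD^0$ avoiding the preimage of $N_0$, and asserts that a $\cC^\infty$-trivialization of the family of theta divisors along $\delta$ carries $\bH'_i$ to $\bH'_j$, so that $\rho(\gamma_i\cdot\delta\cdot\gamma_j^{-1})(\bH_i)=\bH_j$; implicitly this uses that the vanishing-cohomology subspaces $\bH'$ form a local subsystem along the discriminant, which is exactly the point treated informally. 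You instead black-box the identical sliding construction into the classical fact that meridians of the irreducible hypersurface $\tcD\subset Z$ are conjugate in $\pi_1(\cV,t)$, and then the subspace statement comes for free, because $\bH_i$ is intrinsically determined by the group element $\tgamma_i$: by the identities the paper records just before Lemma \ref{lemker}, $N_i=\log\rho(\tgamma_i)|_{\bK_t}$ and $\bH_i=N_i(\bK_t)$, so $h\,\tgamma_i\,h^{-1}=\tgamma_j$ gives $\rho(h)N_i\rho(h)^{-1}=N_j$ (log of a unipotent is a polynomial in it, so it commutes with conjugation) and hence $\rho(h)(\bH_i)=N_j(\rho(h)\bK_t)=\bH_j$. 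Your flagged worry about spurious factors is resolved exactly as the paper sets things up: the local monodromy of the family over $V_i$, transported by $\psi_i^*$, is the operator the paper calls $T_i=\exp(N_i)$ in the proof of Theorem \ref{thmmain}, and this is $\rho$ of the meridian based via $\gamma_i$. What the paper's route buys is an explicit geometric picture of vanishing cycles at one boundary point deforming into those at another; what yours buys is that the only nontrivial inputs are meridian conjugacy and formal properties of $\exp$ and $\log$ for nilpotent operators, so no separate argument is needed that the transport respects the subspaces.
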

\begin{proof} For any $i\ne j$, choose a path $\delta'$ in $l$ connecting $t_i$ and $t_j$. By perturbing $\delta'$, we can assume $\delta'$ does not intersect the inverse image of $N_0$. We can lift $\delta'$ to a path $\delta\subset X\cap \cV$ as a smooth section over $\delta'$ in the tubular neighborhood of the smooth locus $\cD^0$ of $\cD$ in $\cV$.  A $\cC^\infty$-trivialization of the total space of the theta divisors over $\delta$ induces a map on cohomology, which sends $\bH'_i\subset H^4(\T_{t_i})$ to $\bH_j'\subset H^4(\T_{t_j})$. This precisely means that under the monodromy action, $\rho(\gamma_i\cdot\delta\cdot\gamma_j^{-1})$ sends $\bH_i$ to $\bH_j$.  
\end{proof}

\subsection{\em Proof of {\bf Theorem} \ref{thmmain}} It suffices to show that for very general $t\in \cV$, $\bK_t^{-1}$ is a simple Hodge structure. Suppose $0\subsetneqq\mathbb{F}_t\subset \bK_t^{-1}$ is a rational Hodge substructure, then $\mathbb{F}_t$ is an invariant subspace under the action of the Mumford-Tate group $MT(\bK_t^{-1})$. For very general $t$, $MT(\bK_t^{-1})$ contains the identity component $I_{\cV}$ of the {\bf algebraic monodromy group} $G_{\cV}$, i.e., the Zariski closure in $GL(\bK_t^{-1})$ of the monodromy group $\rho(\pi_1(\cV))$, (c.f. \cite[Prop. 6]{Schnell-MumfordTate-2011}).

Now note that $T_i\in G_{\cV}$ is unipotent. In characteristic zero, any nontrivial unipotent element of an algebraic group generates a subgroup with connected Zariski closure $\bG_a$. This implies that $T_i \in I_{\cV}$. Therefore $\bF_t$ is invariant under $T_i$, hence also $N_i$. Therefore $\bF_t$ is the fiber of a sub-variation of Hodge structures of the variation given by $\bK_t^{-1}$ on $X\cap \cV$ and each $N_i$ then induces a `limit mixed Hodge structure' $\bF_{\lim}^i$ on $\bF_t$.

By Lemma \ref{lemker}, for any $0\ne x\in\mathbb{F}_t$, $x\notin \Ker (N_i)$ for some $i$, thus $0\ne N_i(x)\in \mathbb{F}_t\cap \mathbb{H}_i=\mathbb{F}_t\cap W_3\bK^{i,-1}_{\lim}=W_3\bF_{\lim}^i$. Since $\bH_i=W_3\bK^{i,-1}_{\lim}$ is a simple pure Hodge structure (follows from the main result of \cite{IzadiVanStraten}), we conclude $\bH_i\subset\mathbb{F}_t$. By Lemma \ref{lemconjugate}, the $\bH_i$ are conjugate under the monodromy group $\pi_1(\cV)$, thus $\bH_i \subset \bF_t$ for all $i$ and, by Corollary \ref{corprim}, $\mathbb{F}_t=\bK_t^{-1}$.
\qed


\providecommand{\bysame}{\leavevmode\hbox to3em{\hrulefill}\thinspace}
\providecommand{\MR}{\relax\ifhmode\unskip\space\fi MR }
\providecommand{\MRhref}[2]{%
  \href{http://www.ams.org/mathscinet-getitem?mr=#1}{#2}
}
\providecommand{\href}[2]{#2}

\end{document}